\title
{Quasi-multipliers of Hilbert and Banach $C^*$-bimodules}
\author{Alexander Pavlov}
\thanks{
 }
\address{All-Russian Institute of Scientific and Technical Information,
Russian Academy of Sciences (VINITI RAS), Usievicha str. 20,
125190 Moscow A-190, Russia}
\email{axpavlov@gmail.com}
\author{Ulrich Pennig}
\address{Mathematisches Institut, Georg-August-Universit\"{a}t, Bunsenstra{\ss}e
3-5,
D-37073 G\"{o}t\-tin\-gen, Germany}
\email{pennig@math.uni-goettingen.de}
\author{Thomas Schick}
\address{Mathematisches Institut, Georg-August-Universit\"{a}t, Bunsenstra{\ss}e
3-5,
D-37073 G\"{o}t\-tin\-gen, Germany}
\email{schick@uni-math.gwdg.de}
\newtheorem{teo}{Theorem}[section]
\newtheorem{lem}[teo]{Lemma}
\newtheorem{prop}[teo]{Proposition}
\newtheorem{cor}[teo]{Corollary}
\newtheorem{dfn}[teo]{Definition}
\newtheorem{rk}[teo]{Remark}
\theoremstyle{remark}
\newtheorem{ex}[teo]{Example}
\begin{document}
\begin{abstract}
Quasi-multipliers for a Hilbert $C^*$-bimodule $V$ were introduced
by L. G. Brown, J. A. Mingo, and N.-T. Shen (Canad. J. Math., {\bf 46}(1994), 1150-1174) as a certain subset of the Banach bidual
module $V^{**}$. We give another (equivalent) definition of
quasi-multipliers for Hilbert $C^*$-bimodules using the
centralizer approach and then show that quasi-multipliers are, in
fact, universal (maximal) objects of a certain category. We also
introduce quasi-multipliers for bimodules in Kasparov's sense and
even for Banach bimodules over $C^*$-algebras, provided these
$C^*$-algebras act non-degenerately. A topological picture of
quasi-multipliers via the quasi-strict topology is given. Finally,
we describe quasi-multipliers in two main situations: for the
standard Hilbert bimodule $l_2(A)$ and for bimodules of
sections of Hilbert $C^*$-bimodule bundles over locally compact
spaces.
\end{abstract}

\def\Hom{{\rm Hom}}
\def\End{{\rm End}}
\def\vHom{\mathbb{HOM}}
\def\vEnd{\mathbb{END}}

 \maketitle

\section{Introduction}
There are several equivalent ways to introduce quasi-multipliers
(left as well as right and (double) multipliers) for a
$C^*$-algebra $A$. It may be done in terms of centralizers
(cf.~\cite{Busby}), via universal representations treating $A$ as
a $C^*$-subalgebra of its enveloping von Neumann algebra $A^{**}$
(cf., e.g., \cite[\S 3.12]{Pedersen}) and by a categorical
approach describing multipliers as universal objects in suitable
categories (\cite[Ch.2]{Lance}, \cite{Pav1}). These theories were
extended to the category of Hilbert $C^*$-(bi)modules. More
precisely, in
  this context multipliers
were defined and studied in \cite{Bakic, RT02}, left multipliers
in \cite{FraPav07} and quasi-multipliers in \cite{BrMinShen}.
These concepts coincide with the theories for $C^*$-algebras in
the particular situation when the Hilbert (bi)module under
consideration is nothing else but the underlying $C^*$-algebra.

Our aim in this work is to define and study quasi-multipliers for
Hilbert $C^*$-bimodules, Hilbert bimodules in Kasparov's sense
and, more generally, even for Banach bimodules over
$C^*$-algebras, on which both algebras act non-degenerately. For
Hilbert $C^*$-bimodules our definition of quasi-multipliers
differs from the one of \cite{BrMinShen}, but, as we show, these
definitions are actually equivalent. We introduce
quasi-multipliers using the centralizer approach, and then show
that these objects are, in fact, universal (maximal) objects of
some categories. Note that in \cite{BrMinShen} quasi-multipliers
of a Hilbert $C^*$-bimodule $V$ are considered as a certain subset
of the Banach bidual module $V^{**}$ that allows to characterize
embeddings of Hilbert $C^*$-bimodules into $C^*$-algebras,
\cite[Theorem 4.3]{BrMinShen}. We study also the quasi-strict
topology and give the topological picture of quasi-multipliers in
terms of this topology.

Finally, we give the description for quasi-multipliers in two main
situations: for standard bimodules $l_2(A)$ (actually, we obtain a
much more general result concerning quasi-multipliers of infinite
direct sums of bimodules) and for the "commutative" case. The latter
  means that,
for a given locally compact space $X$ and a Hilbert $A$-$B$
bimodule $V$, we treat quasi-multipliers of the Hilbert
$A_0(X)$-$B_0(X)$-bimodule $\mathcal{V}_0(X) = C_0(X,
\mathcal{V})$. These are the continuous sections of a Hilbert $A$-$B$-bimodule
bundle $\mathcal{V}$ over $X$ with typical fiber $V$. Moreover,
$A_0(X)$ and $B_0(X)$ denote the set of continuous $A$-valued and
$B$-valued functions on $X$ vanishing at infinity.

\section{Quasi-multipliers of Hilbert $C^*$-bimodules}
Given a $C^*$-algebra $A$ and a Banach space  $Q$, recall that $Q$
is said to be an \emph{involutive Banach space} if it is equipped
with a sesqui-linear involution $*\colon Q\rightarrow Q$ such that
$\|q^*\|=\|q\|$ for any $q\in Q$. We will also need some
definitions of \cite{Pav1}.

\begin{dfn}{\rm An involutive Banach space $Q$ with involution $q \mapsto q^*$ is called an \emph{$A$-bimodule} if
 there is a map, which is conjugate linear in the first variable
 and linear in the second variable
\[
 A\times Q\rightarrow Q,\quad (a,q)\mapsto a {\lhd} q,
\]
and a bilinear map
 \[
      Q\times A\rightarrow Q,\quad (q,a)\mapsto q\rhd a
\]
 such that
 \begin{gather*}
    (ba)\lhd q=a\lhd(b\lhd q),\quad q\rhd(ab)=(q\rhd a)\rhd b,\\
    (a\lhd q)\rhd b= a\lhd (q\rhd b),\\
     (a\lhd q\rhd b)^*= b\lhd q^*\rhd a,\\
     \|a\lhd q\|\le\|a\|\|q\|,\quad \|q\rhd b\|\le\|q\|\|b\|
      \end{gather*}
 for all $a, b\in A$, $q\in Q$.
 }
  \end{dfn}

  \begin{dfn}{\rm
Let $Q$ be a bimodule over $A$. Moreover assume that $A\subset Q$
 is an involutive Banach subspace. $A$ is said to be a \emph{quasi-ideal}
 of $Q$ if
 \[
 a\lhd b=a^*b,\quad b\rhd a=ba \quad\mbox{for } a,b\in A
 \]
 and $A\lhd q\rhd A\subset A$ for any $q\in Q$.
 }
  \end{dfn}

  \begin{prop}(\cite[comments to Definition 3]{Pav1})
    Let $A\subset Q$ be a quasi-ideal and $Q^{(0)}=\{q\in Q : A\lhd q\rhd A=0\}$.
   Then $Q^{(0)}$ is a sub-bimodule of $Q$ and the following conditions are equivalent.
   \begin{enumerate}[{\rm (i)}]
     \item $Q^{(0)}=0$;
     \item For any $A$-sub-bimodule $X$ of $Q$ the condition
     $X\cap A=\{0\}$ implies $X=\{0\}$.
   \end{enumerate}
 \end{prop}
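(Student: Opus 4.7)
The plan is to verify sub-bimodule-ness directly from the axioms, then prove the equivalence using that $A$ is a quasi-ideal (so $A \lhd q \rhd A \subset A$) together with an approximate unit of $A$.

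First I would show $Q^{(0)}$ is a sub-bimodule. For closure under the involution, note that if $q \in Q^{(0)}$, then for every $a, b \in A$ the identity $(b \lhd q \rhd a)^* = a \lhd q^* \rhd b$ gives $a \lhd q^* \rhd b = 0^* = 0$, so $q^* \in Q^{(0)}$. For closure under the left action by $c \in A$, I use associativity $a \lhd (c \lhd q) = (ca) \lhd q$, whence $a \lhd (c \lhd q) \rhd b = (ca) \lhd q \rhd b = 0$; and similarly $a \lhd (q \rhd c) \rhd b = a \lhd q \rhd (cb) = 0$ for the right action. Closedness of $Q^{(0)}$ as a subspace follows from continuity of the two actions.

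For (i) $\Rightarrow$ (ii), assume $Q^{(0)} = 0$ and let $X \subset Q$ be a sub-bimodule with $X \cap A = 0$. Given $x \in X$, the quasi-ideal property forces $A \lhd x \rhd A \subset A$, while sub-bimodule-ness of $X$ gives $A \lhd x \rhd A \subset X$; hence $A \lhd x \rhd A \subset X \cap A = 0$, so $x \in Q^{(0)} = 0$.

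For (ii) $\Rightarrow$ (i), since $Q^{(0)}$ is itself a sub-bimodule by the first step, it suffices to show $Q^{(0)} \cap A = 0$. For $a$ in this intersection, the quasi-ideal identities $b \lhd a = b^* a$ and $(b^* a) \rhd c = b^* a c$ (valid because $b^*a \in A$ and $c \in A$) reduce $A \lhd a \rhd A = 0$ to the condition $b^* a c = 0$ for all $b, c \in A$. Picking a self-adjoint approximate unit $\{e_\alpha\}$ of the $C^*$-algebra $A$ and passing to the limit in $e_\alpha a e_\alpha = e_\alpha^* a e_\alpha = 0$ yields $a = 0$, so $Q^{(0)} \cap A = 0$ and (ii) gives $Q^{(0)} = 0$.

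The only step with any real content is the last one, where one must exploit that $A$ is a $C^*$-algebra (and therefore carries an approximate unit) to convert the formally weaker annihilation condition $A^* a A = 0$ into $a = 0$; the rest is bookkeeping with the axioms of a bimodule over $A$ and of a quasi-ideal.
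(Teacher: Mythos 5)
Your proof is correct. The paper itself gives no argument for this proposition --- it is quoted from \cite{Pav1} without proof --- so there is nothing to compare against, but your argument is the natural one: the sub-bimodule verification is a direct application of the bimodule axioms, (i)$\Rightarrow$(ii) uses exactly the containment $A\lhd x\rhd A\subset X\cap A$, and (ii)$\Rightarrow$(i) correctly reduces to $Q^{(0)}\cap A=0$ via the quasi-ideal identities and an approximate unit (indeed $b^*ac=0$ for all $b,c\in A$ already forces $a=0$ by taking $b=c=e_\alpha$, or even algebraically from $(a^*a)^2=0$).
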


   \begin{dfn}{\rm A quasi-ideal $A\subset Q$ is \emph{essential} if it
  satisfies one of the equivalent conditions above.
  }
 \end{dfn}

    \begin{dfn}{\rm
     A quasi-ideal $A\subset Q$ is \emph{strictly essential} if
 \[
 \sup\{\|a\lhd q\rhd b\| : a,b\in A, \|a\|\le1, \|b\|\le 1\}=\|q\|
 \]
 for all $q\in Q$.
 }
 \end{dfn}

   Quasi-multipliers $QM(A)$ of $A$ may be, actually, introduced in
  several equivalent ways, but we prefer here to use their original
  definition in terms of quasi-centralizers (cf. \cite{Busby}).

\begin{dfn}{\rm
A \emph{quasi-multiplier} of $A$ is a bilinear bounded map
$q\colon  A\times A\rightarrow A$ such that
\[
q(ca,bd)=cq(a,b)d\quad\text{for } a,b,c,d\in A.
\]
}
\end{dfn}

The set of quasi-multipliers $QM(A)$ is an involutive Banach space
with respect to the operator norm $\|q\|:=\sup\{\|q(a,b)\| :
\|a\|\le
  1,\,\|b\|\le 1\}$ and the involution:
$q^*(a,b)=q(b^*,a^*)^*$, where $a, b\in A$, $q\in QM(A)$ (cf.
\cite[3.12.2]{Pedersen}).

\begin{prop} (\cite{Pav1})
$A$ is embedded into $QM(A)$ as an involutive Banach subspace via
the $*$-inclusion
\[
a\mapsto q_a,\quad q_a(b,c)=bac,
\]
$a, b, c\in A$. Moreover, $A$ is actually a strictly essential
quasi-ideal of $QM(A)$ and $QM(A)$ is maximal (with respect to
injective homomorphisms of involutive Banach spaces acting
identically on $A$) among all quasi strictly essential extensions
of $A$.
\end{prop}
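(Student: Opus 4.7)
The plan is to verify three things in turn: the map $a\mapsto q_a$ is an isometric $*$-embedding, its image is a strictly essential quasi-ideal of $QM(A)$, and $QM(A)$ is maximal among all such extensions of $A$.

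First I would check that $q_a\colon(b,c)\mapsto bac$ lies in $QM(A)$ (immediate from associativity), that $a\mapsto q_a$ is isometric---the inequality $\|q_a\|\le\|a\|$ is trivial and the reverse follows by inserting an approximate unit $(e_\lambda)\subset A$ into both slots and using $\|e_\lambda a e_\lambda\|\to\|a\|$---and that the involution is preserved, since
\[
 q_a^*(b,c) = q_a(c^*, b^*)^* = (c^*ab^*)^* = b\,a^*\,c = q_{a^*}(b,c).
\]

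Next I would put on $QM(A)$ the $A$-bimodule structure
\[
 (a\lhd q)(x,y) := q(xa^*, y), \qquad (q\rhd b)(x,y) := q(x, by),
\]
the adjoint in the first slot being forced by the conjugate-linearity of $\lhd$. The bimodule axioms of Definition~2.1 are then routine, and the identity $(a\lhd q\rhd b)^*=b\lhd q^*\rhd a$ reduces to the definition of $q^*$. On the embedded copy of $A$ one computes $a\lhd q_b=q_{a^*b}$ and $q_a\rhd b=q_{ab}$, matching the quasi-ideal relations $a\lhd b=a^*b$ and $b\rhd a=ba$. The crucial containment $A\lhd q\rhd A\subset A$ is immediate from the quasi-multiplier identity itself: for $a,b\in A$ and $q\in QM(A)$,
\[
 (a\lhd q\rhd b)(x,y) = q(xa^*, by) = x\,q(a^*, b)\,y,
\]
so $a\lhd q\rhd b$ corresponds under the embedding to $q(a^*,b)\in A$. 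Taking the supremum over $\|a\|,\|b\|\le 1$ in the same formula gives exactly $\|q\|$, i.e.\ strict essentiality.

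For the maximality, let $A\subset Q$ be an arbitrary strictly essential quasi-ideal and set
\[
 \tilde\phi(q)(a,b) := a^*\lhd q\rhd b \qquad (q\in Q,\ a,b\in A).
\]
The value lies in $A$ by the quasi-ideal property, and the adjoint in the first slot makes $\tilde\phi(q)$ genuinely bilinear. The quasi-multiplier identity $\tilde\phi(q)(ca,bd)=c\,\tilde\phi(q)(a,b)\,d$ follows from the bimodule axioms $(ba)\lhd q=a\lhd(b\lhd q)$ and $q\rhd(bd)=(q\rhd b)\rhd d$, together with the quasi-ideal identities $x^*\lhd y=xy$ and $y\rhd z=yz$ applied to $u:=a^*\lhd q\rhd b\in A$. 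The estimate $\|\tilde\phi(q)\|\le\|q\|$ is clear; strict essentiality upgrades it to an equality, so $\tilde\phi$ is isometric, hence injective. Involution compatibility is the axiom $(a\lhd q\rhd b)^*=b\lhd q^*\rhd a$ unwound, and for $c\in A$ one has $\tilde\phi(c)(a,b)=a^*\lhd c\rhd b=acb=q_c(a,b)$, so $\tilde\phi$ acts as the identity on $A$ under the embedding of the first step.

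The main obstacle is purely bookkeeping: the conjugate-linear convention for $\lhd$ forces adjoints in precisely the right positions---both in the bimodule action on $QM(A)$ and in the defining formula of $\tilde\phi$---and it is exactly this placement that makes all compatibilities (bilinearity, the $QM$-identity, involution, the quasi-ideal relations) line up. Once the correct formulas are in place, every verification is a one-line calculation from the axioms of a bimodule and of a quasi-ideal.
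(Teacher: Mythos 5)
Your argument is correct and complete. Note, however, that the paper itself gives no proof of this proposition --- it is quoted from \cite{Pav1} as a known result --- so there is no in-paper argument to compare against line by line. Your verification is sound: the involution computation $q_a^*=q_{a^*}$, the isometry via an approximate unit, the choice $(a\lhd q)(x,y)=q(xa^*,y)$ (with the adjoint correctly forced by the conjugate-linearity required in Definition~2.1, in contrast to the adjoint-free action the paper later uses on $QM(V)$ for Hilbert bimodules), the identification $a\lhd q\rhd b=q_{q(a^*,b)}$ giving both the quasi-ideal property and strict essentiality, and the maximality map $\tilde\phi(q)(a,b)=a^*\lhd q\rhd b$ all check out; in particular the quasi-multiplier identity for $\tilde\phi(q)$ does follow from $(ba)\lhd q=a\lhd(b\lhd q)$, $q\rhd(ab)=(q\rhd a)\rhd b$ and the quasi-ideal relations applied to $u=a^*\lhd q\rhd b\in A$, and the $*$-compatibility is exactly the axiom $(a\lhd q\rhd b)^*=b\lhd q^*\rhd a$. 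Your construction is the precise involutive analogue of the map $\Xi(y)(a,b)=\Phi^{-1}(ayb)$ that the paper uses to prove maximality of $(QM(V),RM(A),LM(B),\Gamma)$ in Section~2; the only structural difference is the adjoint in the first slot, which is absent there because Hilbert $A$-$B$-bimodules carry a genuinely bilinear action. The one point worth making explicit if you write this up: the statement only demands an injective homomorphism of involutive Banach spaces fixing $A$, so you need not (and do not) check that $\tilde\phi$ intertwines the full bimodule actions, though that too would follow by the same one-line computations.
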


Now we are going to adopt the considerations of \cite{Bakic,
FraPav07} about double and left  multipliers of Hilbert
$C^*$-modules to introduce quasi-multipliers in the $C^*$-module
context. But, as we saw before, even for $C^*$-algebras we need a
bimodule structure for the definition of quasi-multipliers.
Consequently, we need Hilbert $C^*$-bimodules (moreover, equipped
with some involution) instead of usual Hilbert $C^*$-modules for
the following considerations. Thus, we come to the following
definition.

\begin{dfn}{\rm
A \emph{Hilbert $A$-$B$-bimodule} $V$ is both: a left Hilbert $A$-module
and a right Hilbert $B$-module with commuting actions such that its left
$_A\langle\cdot,\cdot\rangle$ and right $\langle\cdot,\cdot\rangle_B$
inner products satisfy the condition
\[
_A\langle x,y\rangle\,z=x\,\langle y,z\rangle_B
\]
for all $x, y, z\in V$. If $V$ is a Hilbert $A$-$A$-bimodule and a
Banach involutive space such that
\[
(ax)^*=x^*a^*,\quad (xa)^*=a^*x^*,\quad {for }\; a\in A, x\in V.
\]
is said to be an \emph{involutive Hilbert $A$-bimodule}. }
\end{dfn}

The two norms defined on $V$,
one from each inner product necessarily
coincide by \cite[Corollary 1.11]{BrMinShen}.

\begin{ex}
Any $C^*$-algebra may be considered as an involutive Hilbert
bimodule over itself with respect to the inner products $_A\langle
a,b\rangle=ab^*$ and $\langle a,b\rangle_A=a^*b$, where $a, b\in
A$. Obviously, any free module $A^n$ is an involutive Hilbert
bimodule. Observe, however, that the standard module $l^2(A)$ in
  general is not involutive, as was pointed out to us by the referee.
 \end{ex}

\begin{ex}
Any right Hilbert $A$-module $V$ may be considered as a Hilbert
$K(V)$-$A$-bimodule with respect to the inner product
\[
 _{K(V)}\langle x,y \rangle = x\,\langle y, \cdot \rangle_A\ .
\]
\end{ex}

 \begin{ex}
Let $A$ be a $C^*$-subalgebra of a $C^*$-algebra $B$ and
 $E\colon B\rightarrow A$ be a \emph{conditional expectation}, i.e. a surjective projection
 of norm one satisfying  the following conditions:
 \[
E(ab)=aE(b),\quad E(ba)=E(b)a,\quad E(a)=a,
 \]
 for $a\in A$, $b\in B$ (cf.~\cite{Wata}). Then $B$ (with its $C^*$-algebra
 involution) is an involutive
  pre-Hilbert $A$-bimodule with respect to the inner products
 $_A\langle x,y\rangle=E(xy^*)$ and $\langle x,y\rangle_A=E(x^*y)$.
 This module is Hilbert if and only if $E$ is \emph{topologically of index-finite type},
 i.e. the mapping $(K\cdot E-{\rm id}_B)$ is positive for some real number $K\ge 1$
 (cf. \cite{FrankKirJOP, FMTZAA}).
 \end{ex}

\begin{dfn}{\rm
Given two $C^*$-algebras $A$ and $B$ and a Hilbert
$A$-$B$-bimodule $V$, the quasi-multipliers of $V$ are defined as
the set of all bounded $A$-$B$-bilinear homo\-mor\-phisms from
$A\times B$ to $V$,
\begin{equation}
QM(V)={\rm Hom}_{A,B}(A\times B, V)\label{eq:defQM},
\end{equation}
with norm $\|q\|:=\sup\{\|q(a,b)\|\mid a\in A,b\in B \text{ with }\|a\|\le
1,\|b\|\le 1\}$. }
\end{dfn}

$V$ is isometrically embedded into $QM(V)$ by the map
\begin{gather}\label{eq:V_into_QM(V)}
\Gamma \colon  V\rightarrow QM(V),\quad  \Gamma(x)(a,b)=axb,
\end{gather}
and we will identify $V$ with its image under this embedding. If
$V$ is an involutive Hilbert $A$-$A$-bimodule, then $QM(V)$
carries an involution $T^*(a,b)=T(b^*,a^*)^*$ with respect to
which quasi-multipliers $QM(V)$ form an involutive Banach space.

\begin{rk}{\rm
In \cite{BrMinShen} quasi-multipliers were defined via the bidual
$V^{**}$ of $V$ as a Banach space by the formula
\[
 \widetilde{QM}(V) = \{t \in V^{**}\ |\ atb \in V \text{ for all } a\in A, b\in B\}\ .
\]
This definition actually coincides with the one above in
the following sense. Clearly, every element $t \in \widetilde{QM}(V)$
defines a bimodule homomorphism
\[
 q_t \colon  A \times B \longrightarrow V,  \quad (a,b) \mapsto atb.
\]
That means there is a linear map
\[
\varphi \colon  \widetilde{QM}(V)\rightarrow QM(V),\quad t\mapsto q_t,
\]
which, in fact, is an isometry, because
\[
\|q_t\|=\sup\{\|atb\| : \|a\|\le 1, \|b\|\le 1\}=\|t\|
\]
for any $t\in \widetilde{QM}(V)$ by \cite[Lemma 4.1
(iii)]{BrMinShen}.  To see that $\varphi$ is surjective, let $q
\in QM(V)$ be given, choose approximate  units $\{e_{\alpha}\}$ in
$A$ and $\{u_{\beta}\}$ in $B$. Then by \cite[Lemma 4.1
(iv)]{BrMinShen} there is $t\in \widetilde{QM}(V)$ such that
$q(a,b)=\lim_{\alpha, \beta} aq(e_{\alpha},u_{\beta})b=atb$. Such
$t$ is just a $\sigma(V^{**},V^{*})$ cluster point of the bounded
net $\{q(e_{\alpha},u_{\beta})\}$, which has to exist by the
Banach-Alaoglu theorem}.
\end{rk}

\begin{dfn}{\rm
Given two Banach algebras $\mathcal{A}$ and $\mathcal{B}$,
a Banach space $W$ is called a
\emph{Banach-$\mathcal{A}$-$\mathcal{B}$-bimodule}
if it is equipped with a norm continuous left action of $\mathcal{A}$ and a
norm continuous right action of $\mathcal{B}$, such that both actions commute.}
\end{dfn}

\begin{dfn}{\rm
Let $V$ be a Hilbert $A$-$B$-bimodule. The \emph{left multipliers} of $V$ are
\[
 LM(V) = \text{Hom}_{-,B}(B,V) \ ,
\]
i.e. the $B$-linear homomorphisms from $B$ to $V$. The corresponding \emph{right multipliers} are given by
\[
 RM(V) = \text{Hom}_{A,-}(A,V)\ .
\]}
\end{dfn}
In particular $LM(A)$, where $A$ is considered as an
$A$-$A$-bimodule is a Banach algebra with multiplication given by
composition of homomorphisms. In a similar way, we turn $RM(A)$
into a Banach algebra, but here we will use the \emph{opposite
multiplication}, i.e.
\[
 \alpha_1 \cdot \alpha_2 := \alpha_2 \circ \alpha_1
\]
for $\alpha_i \in RM(A)$. With this convention $A$ is a left ideal in $LM(A)$
and a right ideal in $RM(A)$.

Define $QM(V):=\Hom_{A,B}(A\times B,V)$ as the set of bounded
$(A,B)$-bilinear maps as in \eqref{eq:defQM}.

$QM(V)$ comes equipped with an $A$-$B$-bimodule structure in the following way.
Let $a,a' \in A, b,b' \in B, q \in QM(V)$, then
\[
  (q \rhd b)(a',b') := q(a', b\,b') \quad , \quad (a \lhd q)(a',b') = q(a'a, b') \ .
\]
This can be extended to a Banach $RM(A)$-$LM(B)$-bimodule structure via
\[
 (q \rhd \beta)(a,b) := q(a, \beta(b)) \quad , \quad (\alpha \lhd q)(a,b) = q(\alpha(a), b)\ .
\]
for $\alpha \in RM(A)$, $\beta \in LM(B)$.

\begin{rk}{\rm
Obviously, if $A$ is unital, then $QM(V)=LM(V)$. If $B$ is unital,
then $QM(V)=RM(V)$. And if both $A$ and $B$ are unital, then
$QM(V)=V$.}
\end{rk}

Define a locally convex \emph{quasi-strict topology} (we will
denote it by the abbreviation $q.s.$) on ${\rm Hom}_{A,B}(A\times
B, V)$ by the family of semi-norms
\[
\{\nu_{a,b} : a\in A, b\in B\}, \text{ where }
\nu_{a,b}(q)=\|a\lhd q\rhd b\|,\quad q\in {\rm Hom}_{A,B}(A\times
B, V)
\]
and define $X:=[V]_{q.s.}$ as the completion of $V$ with respect
to the quasi-strict topology, restricted to $V$. Now consider a
Cauchy net $\mathbf{x}=\{x_i\}$ in the topological space $(V,
q.s.)$. For any $a\in A$, $b\in B$ the net $\{ax_ib\}$ converges
to some vector $q_{\mathbf{x}}(a,b)\in V$.

\begin{prop}\label{prop:quasi-strict_completion}
The correspondence $\mathbf{x}\mapsto q_{\mathbf{x}}$
 is a linear isometric  map from $X$ onto $QM(V)$. In
the other words, quasi-multipliers of $V$ coincide with the
completion of $V$ with respect to the quasi-strict topology.
\end{prop}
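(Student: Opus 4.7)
The plan is to interpret the claim as a density result. Under the isometric embedding $\Gamma\colon V\hookrightarrow QM(V)$, the quasi-strict seminorm $\nu_{a,b}$ pulls back to $x\mapsto\|axb\|$ on $V$ (since $a\lhd\Gamma(x)\rhd b=\Gamma(axb)$ and $\Gamma$ is isometric), which is exactly the topology used to define $X$. So we may identify $X$ with the q.s.-closure of $\Gamma(V)$ inside $QM(V)$, equipped with the operator norm; then the correspondence $\mathbf{x}\mapsto q_{\mathbf{x}}$ is literally the map sending a q.s.-Cauchy net to its q.s.-limit in $QM(V)$. The work then reduces to showing: (a) $q_{\mathbf{x}}\in QM(V)$ with the expected norm, and (b) $\Gamma(V)$ is q.s.-dense in $QM(V)$.

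For (a), given a q.s.-Cauchy net $\{x_i\}\subset V$, each $\{ax_ib\}_i$ is norm-Cauchy in the complete space $V$ and hence norm-convergent to some $q_{\mathbf{x}}(a,b)\in V$. Bilinearity and the bimodule identity $q_{\mathbf{x}}(ca,bd)=c\,q_{\mathbf{x}}(a,b)\,d$ follow by passing to the limit in $c(ax_ib)d=(ca)x_i(bd)$. The delicate step is joint boundedness: since $\{ax_ib\}$ converges (and hence is bounded) for each fixed $(a,b)$, the family $\{\Gamma(x_i)\}$ of bounded bilinear maps is pointwise bounded on $A\times B$, and Banach--Steinhaus yields $\sup_i\|x_i\|=\sup_i\|\Gamma(x_i)\|<\infty$. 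Therefore $\|q_{\mathbf{x}}\|\le\sup_i\|x_i\|<\infty$ and $q_{\mathbf{x}}\in QM(V)$. Linearity of $\mathbf{x}\mapsto q_{\mathbf{x}}$ is immediate, and since the operator norm on $QM(V)$ is $\|q\|=\sup_{\|a\|,\|b\|\le1}\|q(a,b)\|$, the identification with the subspace norm on $X$ is an isometry.

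For (b), I would use approximate units, in the same spirit as the remark following the definition of $QM(V)$. Fix bounded approximate identities $\{e_\alpha\}\subset A$ and $\{u_\beta\}\subset B$. For $q\in QM(V)$, set $x_{\alpha,\beta}:=q(e_\alpha,u_\beta)\in V$. Then for every $a\in A$, $b\in B$,
$$a\,x_{\alpha,\beta}\,b \;=\; q(ae_\alpha,\,u_\beta b)\;\longrightarrow\;q(a,b),$$
because $ae_\alpha\to a$ and $u_\beta b\to b$ in norm and $q$ is norm-continuous in each variable. Thus $\{\Gamma(x_{\alpha,\beta})\}$ q.s.-converges to $q$; this proves density and also exhibits $q=q_{\mathbf{x}}$ for the net $\mathbf{x}=\{x_{\alpha,\beta}\}$, yielding surjectivity.

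The main obstacle is the joint-boundedness step in (a): pointwise convergence of $\{\Gamma(x_i)\}$ only guarantees separate boundedness, whereas membership in $QM(V)$ demands a single bound valid for all $(a,b)$ in the unit ball, and this gap is precisely what the uniform boundedness principle closes. A secondary care-point is the convergence in (b) through a double net, for which one passes to the product directed set and uses the separate norm continuity of $q$; the underlying non-degeneracy of the $A$- and $B$-actions on $V$ (automatic for a Hilbert $A$-$B$-bimodule, via the inner products) makes this routine.
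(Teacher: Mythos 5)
Your proposal is correct and follows essentially the same route as the paper: the Banach--Steinhaus (uniform boundedness) argument to get $q_{\mathbf{x}}\in QM(V)$ with $\|q_{\mathbf{x}}\|\le\sup_i\|x_i\|$, and the net $\{q(e_\alpha,u_\beta)\}$ built from approximate units to show every $q\in QM(V)$ arises as $q_{\mathbf{x}}$. The extra details you supply (the identification $a\lhd\Gamma(x)\rhd b=\Gamma(axb)$ pulling the seminorms back to $x\mapsto\|axb\|$, and the explicit bilinearity check) are points the paper treats as obvious, so there is no substantive difference.
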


\begin{proof} Obviously,  $q_{\mathbf{x}}$ is a bilinear map for
any Cauchy net $\mathbf{x}=\{x_i\}$ of the space $(V, q.s.)$.  By
the Banach-Steinhaus theorem the set of real numbers $\{\|x_i\|\}$
is bounded, say by a constant $C$. Then
$\|q_{\mathbf{x}}(a,b)\|\le C\|a\|\|b\|$, so $q_{\mathbf{x}}$
actually belongs to $QM(V)$. Now
 let $q \in QM(V)$ be given, choose approximate  units $\{e_{\alpha}\}$ in
$A$ and $\{u_{\beta}\}$ in $B$. Since
\[
 (a\lhd q\rhd b) (e_{\alpha},u_{\beta})\ = q(e_{\alpha}a, b u_{\beta}) \rightarrow q(a,b)
\]
for all $a\in A, b\in B$, the net
$\mathbf{y}=\{q(e_{\alpha},u_{\beta})\}$ is a Cauchy net in $(V,
q.s.)$ and $q=q_{\mathbf{y}}$, so $X=QM(V)$ as required.
\end{proof}

Consider also the locally convex \emph{strong topology} (we will
denote it by the abbreviation $s$) of point-wise convergence
on ${\rm Hom}_{A,B}(A\times B, V)$ defined by the family of
semi-norms
\[
\{\mu_{a,b} : a\in A, b\in B\}, \text{ where } \mu_{a,b}(q)=\|
q(a,b)\|,\quad q\in {\rm Hom}_{A,B}(A\times B, V).
\]
Both these topologies ---quasi-strict and strong--- coincide on
$V$ considered as a subspace of $QM(V)$. This assertion may be
strengthened in the following way.

\begin{lem}\label{lem:qs_and_strong_top}
 $\nu_{a,b}(q)=\mu_{a,b}(q)$, i.e. $\|q(a,b)\|= \|a\lhd q\rhd b\|$,
for any $q\in QM(V)$, $a\in A, b\in B$.
\end{lem}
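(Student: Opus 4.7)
The plan is to unfold both sides to a common supremum expression and then invoke a standard ``$C^*$-bimodule norm = supremum over left/right contractions'' identity.

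First, I would recall that by definition of the bimodule structure on $QM(V)$, one has $(a\lhd q\rhd b)(a',b') = q(a'a,\, bb')$. Taking the operator-norm supremum over the unit ball in $A\times B$ gives
\[
 \nu_{a,b}(q) \;=\; \|a\lhd q\rhd b\| \;=\; \sup\bigl\{\|q(a'a,\,bb')\| : a'\in A,\,b'\in B,\,\|a'\|\le 1,\,\|b'\|\le 1\bigr\}.
\]
The $A$-$B$-bilinearity of $q$ turns the right-hand side into $\sup\{\|a'\, q(a,b)\, b'\|\}$ over the same set. So the claim reduces to the purely bimodule-theoretic identity
\[
 \|x\| \;=\; \sup\bigl\{\|a' x b'\| : \|a'\|\le 1,\,\|b'\|\le 1\bigr\}
\]
for every vector $x\in V$, applied to $x=q(a,b)$.

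Next I would verify this identity. The inequality ``$\ge$'' is immediate from the standard estimates $\|a'x\|\le\|a'\|\|x\|$ and $\|xb'\|\le\|x\|\|b'\|$ available in any Hilbert $C^*$-(bi)module (which themselves come from $\|a'x\|^2=\|\langle a'x,a'x\rangle_B\|\le\|a'\|^2\|x\|^2$ and similarly on the other side, using Lemma/Corollary 1.11 of \cite{BrMinShen} to identify the two norms). For ``$\le$'', I would pick approximate units $\{e_\alpha\}\subset A$ and $\{u_\beta\}\subset B$; since $e_\alpha x u_\beta\to x$ in norm (using $e_\alpha x\to x$ from the left Hilbert module structure and $xu_\beta\to x$ from the right one), the supremum is at least $\|x\|$.

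The only mild subtlety worth double-checking is the submultiplicative estimate in the bimodule setting, which I would derive from the Cauchy--Schwarz inequality for the relevant inner product and the positivity of ${a'}^*a'$, exactly as for ordinary Hilbert $C^*$-modules; everything else is a clean chain of equalities. Combining the three displays yields $\nu_{a,b}(q)=\|q(a,b)\|=\mu_{a,b}(q)$, as required.
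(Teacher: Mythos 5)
Your proposal is correct and follows essentially the same route as the paper: one inequality comes from unfolding $\|a\lhd q\rhd b\|$ as $\sup\{\|a'q(a,b)b'\|\}$ via bilinearity and the submultiplicative norm estimates, the other from approximate units acting on $q(a,b)$ (the paper phrases this as $q(e_\alpha a, bu_\beta)=(a\lhd q\rhd b)(e_\alpha,u_\beta)\to q(a,b)$, which is the same computation). Isolating the identity $\|x\|=\sup\{\|a'xb'\|\}$ as a standalone fact is a harmless repackaging, and your attention to the coincidence of the two module norms via \cite[Corollary 1.11]{BrMinShen} is appropriate.
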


\begin{proof}
Let $q \in QM(V)$, $a\in A, b\in B$ be given, choose approximate
units $\{e_{\alpha}\}$ in $A$ and $\{u_{\beta}\}$ in $B$. Then the
net $q(e_{\alpha}a,bu_{\beta})=(a\lhd q\rhd
b)(e_{\alpha},u_{\beta})$ converges in norm to $q(a,b)$. It
implies that $\|q(a,b)\|=\lim\|a\lhd q\rhd
b(e_\alpha,u_\beta)\|\le \|a\lhd q\rhd b\|$. On the other hand,
\begin{align*}
\|a\lhd q\rhd b\|&=\sup\{\|(a\lhd q\rhd b)(c,d)\| :
\|c\|\le 1, \|d\|\le 1, c\in A, d\in B\}\\
&=\sup\{\|q(ca,bd)\| : \|c\|\le 1, \|d\|\le 1, c\in A, d\in B\}\\
&=\sup\{\|cq(a,b)d\| : \|c\|\le 1, \|d\|\le 1, c\in A, d\in B\}\\
&\le \|q(a,b)\|,
\end{align*}
which proves the inverse inequality.
\end{proof}

Consider the canonical embedding $\Gamma \colon  V \rightarrow
QM(V)$ given by (\ref{eq:V_into_QM(V)}).  This way $QM(V)$
provides an extension of $V$.

\begin{dfn}\label{dfn:quasi_extension}
{\rm
A \emph{quasi extension} of a Hilbert $A$-$B$-bimodule $V$
consists of:
\begin{enumerate}[(i)]
 \item two Banach algebras $\mathcal{A}$ and $\mathcal{B}$, such that $A \subset \mathcal{A}$
       is a right ideal and $B \subset \mathcal{B}$ is a left ideal,
 \item a Banach $\mathcal{A}$-$\mathcal{B}$-bimodule $W$
 \item and an isometric bimodule homomorphism $\Phi \colon  V \longrightarrow W$ with
       \[
     \text{Im}(\Phi) = A\,W B:=\overline{{\rm span}\{axb : a\in A, x\in W, b\in B\}}\ .
       \]
\end{enumerate}}
\end{dfn}

\begin{dfn}{\rm A quasi extension
$(W,\mathcal{A},\mathcal{B},\Phi)$ of $V$ is said to be
\emph{strictly essential} if $A\subset \mathcal{A}$ is a
\emph{right strictly essential ideal}, i.e.
\begin{gather}\label{eq:s_e_right_ext}
\|\alpha\|=\sup\{\|a\alpha\| : a\in A, \|a\|\le 1\}\quad \text{
for all } \alpha\in \mathcal{A},
\end{gather}
$B\subset \mathcal{B}$ is a \emph{left strictly essential ideal},
i.e.
\[
\|\beta\|=\sup\{\|\beta b\| : b\in B, \|b\|\le 1\}\quad \text{ for
all } \beta\in \mathcal{B}
\]
and the following condition holds
\begin{gather}\label{eq:s_e_quasi_ext}
    \|y\|=\sup\{\|ayb\|: a\in A, b\in B, \|a\|\le 1, \|b\|\le 1\} \quad \text{ for
all } y\in W.
\end{gather}
}
\end{dfn}

\begin{dfn}\rm\label{dfn:max_s_e_e}
A strictly essential quasi extension
$(\widehat{W},\widehat{\mathcal{A}},\widehat{\mathcal{B}},\widehat{\Phi})$
of $V$ is said to be \emph{maximal} if for any other strictly
essential quasi extension $(W,\mathcal{A},\mathcal{B},\Phi)$ there
are an isometric homomorphism $\lambda \colon
\mathcal{A}\rightarrow \widehat{\mathcal{A}}$, which is the
identity on $A$, an isometric homomorphism $\mu \colon
\mathcal{B}\rightarrow \widehat{\mathcal{B}}$, which is the
identity on $B$ and an isometric linear map $\Theta \colon
W\rightarrow \widehat{W}$ such that it satisfies the condition
\begin{gather}\label{eq:s_e_ext_mod_cond}
    \Theta (ayb)=\lambda(a)\Theta(y)\mu(b)\quad\text{for all}\quad
    a\in \mathcal{A}, y\in W, b\in \mathcal{B},
\end{gather}
and such that the diagram
\[
\begin{diagram}
\node{W}\arrow[2]{e,t}{\Theta} \node[2]{\widehat{W}}
\\ \node[2]{V}\arrow{ne,r}{\widehat{\Phi}}\arrow{nw,b}{\Phi}
\end{diagram}
\]
is commutative.
\end{dfn}

\begin{teo} Given an $A$-$B$-bimodule $V$. Then
$(QM(V),RM(A),LM(B),\Gamma)$ is a maximal strictly essential quasi
extension of $V$, where $\Gamma$ is defined by
(\ref{eq:V_into_QM(V)}).
\end{teo}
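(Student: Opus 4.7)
The plan is to split the verification into two independent pieces: first checking that $(QM(V), RM(A), LM(B), \Gamma)$ satisfies the conditions of a strictly essential quasi extension, and then establishing the universal property by exhibiting the required arrows $\lambda, \mu, \Theta$ out of an arbitrary competitor.

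For the first piece, the embeddings $A \hookrightarrow RM(A)$ and $B \hookrightarrow LM(B)$, the right/left ideal properties, and the right/left strict essentiality of these inclusions are standard and follow directly from the operator-norm description of $RM(A)$ and $LM(B)$ together with the existence of approximate units in $A$ and $B$. The $RM(A)$-$LM(B)$-bimodule structure on $QM(V)$ was already fixed in the excerpt, so I would verify the image identity $\text{Im}(\Gamma)=A\,QM(V)\,B$ via the computation $(a\lhd q\rhd b)(a',b') = q(a'a,bb') = a' q(a,b)b' = \Gamma(q(a,b))(a',b')$, which gives $A\,QM(V)\,B \subseteq \Gamma(V)$; the reverse inclusion follows from $\Gamma(x) = \lim_{\alpha,\beta}\Gamma(e_\alpha x u_\beta) = \lim_{\alpha,\beta} e_\alpha \lhd \Gamma(x) \rhd u_\beta$ using approximate units $\{e_\alpha\}\subset A$, $\{u_\beta\}\subset B$ and the fact that $e_\alpha x u_\beta \to x$ in the Hilbert bimodule $V$. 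The strict essentiality of $V\subset QM(V)$ is then a direct consequence of Lemma~\ref{lem:qs_and_strong_top}, which yields $\|q\| = \sup_{\|a\|,\|b\|\le 1}\|q(a,b)\| = \sup_{\|a\|,\|b\|\le 1}\|a\lhd q\rhd b\|$.

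For the universal property, given another strictly essential quasi extension $(W,\mathcal{A},\mathcal{B},\Phi)$, I would identify $V$ with $\Phi(V)=AWB$ and set $\lambda(\alpha)(a) := a\alpha$ (well defined since $A$ is a right ideal in $\mathcal{A}$) and $\mu(\beta)(b) := \beta b$. Left $A$-linearity shows $\lambda(\alpha)\in RM(A)$; that $\lambda$ is an algebra homomorphism with respect to the opposite convention on $RM(A)$, restricts to the identity on $A$, and is isometric (by the right strict essentiality of $A\subset \mathcal{A}$) are short direct checks, and analogously for $\mu$. For the bimodule arrow I would set $\Theta(y)(a,b) := \Phi^{-1}(ayb)$, which is meaningful because $ayb\in AWB = \text{Im}(\Phi)$. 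Bilinearity, the required $A$-$B$-bilinearity in $(a,b)$, and boundedness $\|\Theta(y)(a,b)\|\le\|a\|\|y\|\|b\|$ all follow from the bimodule compatibility of $\Phi$, so $\Theta(y)\in QM(V)$, and the isometry $\|\Theta(y)\| = \sup_{\|a\|,\|b\|\le 1}\|ayb\| = \|y\|$ is precisely (\ref{eq:s_e_quasi_ext}).

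The remaining compatibilities are one-line computations: both $\Theta(\alpha y \beta)(a',b')$ and $(\lambda(\alpha)\lhd \Theta(y)\rhd \mu(\beta))(a',b')$ equal $\Phi^{-1}((a'\alpha)y(\beta b'))$, giving the required relation (\ref{eq:s_e_ext_mod_cond}), and $\Theta\circ\Phi=\Gamma$ follows from $\Phi^{-1}(a\Phi(x)b) = \Phi^{-1}(\Phi(axb)) = axb = \Gamma(x)(a,b)$. I expect the main obstacle to be organizational rather than analytic: keeping the opposite-multiplication convention on $RM(A)$ consistent throughout, and handling $\Phi$ and $\Phi^{-1}$ in formulas on the nose rather than pretending that $V\subseteq W$ is a literal subspace. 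The condition $\text{Im}(\Phi)=AWB$ is what makes the whole construction work, because it guarantees that $ayb$ lies in the non-closed image of the isometric $\Phi$, so $\Phi^{-1}(ayb)$ is an honest single element of $V$.
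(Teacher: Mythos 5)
Your proof is correct and follows essentially the same route as the paper: the same verification of the quasi-extension axioms (the identity $(a\lhd q\rhd b)(a',b')=a'q(a,b)b'=\Gamma(q(a,b))(a',b')$ combined with closedness of $\mathrm{Im}\,\Gamma$ and the approximate-unit argument for the reverse inclusion, with strict essentiality coming from the $\nu_{a,b}=\mu_{a,b}$ lemma), and the same map $\Theta(y)(a,b)=\Phi^{-1}(ayb)$ for maximality, with the identical computations for the compatibility condition and the commuting triangle. The only (minor) divergence is that you construct $\lambda(\alpha)(a)=a\alpha$ and $\mu(\beta)(b)=\beta b$ explicitly and check they are isometric homomorphisms for the opposite-multiplication convention, whereas the paper obtains $\lambda$ and $\mu$ abstractly by citing the maximality of $RM(A)$ and $LM(B)$ as strictly essential one-sided extensions from \cite{Pav1}; your version is more self-contained and makes the step $[\lambda(\alpha)(a)]\,y\,[\mu(\beta)(b)]=a\alpha y\beta b$ immediate rather than implicit.
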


\begin{proof} $A\subset RM(A)$ is a right strictly essential ideal
and $B\subset LM(B)$ is a left strictly essential ideal by
\cite[Lemma 6]{Pav1}. Using approximate units of $A$ and
$B$ a straightforward verification yields the formula
(\ref{eq:s_e_quasi_ext}). Now let us check the third condition of
Definition \ref{dfn:quasi_extension}. Obviously, $\text{Im}
\Gamma\subset A QM(V) B$ and we only have to ensure the inverse
inclusion. Given arbitrary $q\in QM(V), a\in A, b\in B$. Then for
any $c\in A, d\in B$ one has
\[
(a\lhd q\rhd b)(c,d)=q(ca,bd)=cq(a,b)d=\Gamma(q(a,b))(c,d).
\]
Because $\Gamma$ is an isometry, $\text{Im}(\Gamma)$ is closed,
hence $$\text{Im} \Gamma= A QM(V) B$$ and
$(QM(V),RM(A),LM(B),\Gamma)$ is a  strictly essential quasi
extension of $V$. To establish its maximality one chooses any
other strictly essential quasi extension
$(W,\mathcal{A},\mathcal{B},\Phi)$ of $V$. By \cite{Pav1} $LM(B)$
is a maximal left strictly essential extension of $B$ and,
consequently, there is an isometric homomorphism $\mu \colon
\mathcal{B}\rightarrow LM(B)$, which restricts to the identity on
$B$. Similarly, there is an isometric homomorphism $\lambda \colon
\mathcal{A}\rightarrow RM(A)$, which acts identically on $A$. Now
for $y\in W, a\in A, b\in B$ put
\[
\Xi (y)(a,b)=\Phi^{-1}(ayb).
\]
Obviously, $\Xi (y)$ is a bilinear map from $A\times B$ to $V$.
Moreover, $\Xi$ is actually an isometry, because
\begin{align*}
    \|\Xi(y)\|&=\sup\{\|\Phi^{-1}(ayb)\| : a\in A, b\in B,
\|a\|\le1, \|b\|\le 1\}\\
&=\sup\{\|ayb\| : a\in A, b\in B, \|a\|\le1, \|b\|\le
1\}=\|y\|,
\end{align*}
where we have used item (iii) of Definition
\ref{dfn:quasi_extension} and condition (\ref{eq:s_e_quasi_ext}).
Now choose $a\in A, \alpha\in \mathcal{A}, b\in B, \beta\in
\mathcal{B}$ and $y\in W$. On the one hand one has \[ \Xi(\alpha
y\beta)(a,b)=\Phi^{-1}(a\alpha y\beta b) \] and on the other hand
\begin{align*}
(\lambda(\alpha)\lhd \Xi(y)\rhd\mu(\beta))(a,b)
&=\Xi(y)(\lambda(\alpha)(a),\mu(\beta)(b))\\
&=\Phi^{-1}([\lambda(\alpha)(a)]y[\mu(\beta)(b)])\\
&=\Phi^{-1}(a\alpha y \beta b).
\end{align*}
So, the map $\Xi$ satisfies the condition
(\ref{eq:s_e_ext_mod_cond}). The theorem is proved.
\end{proof}

\section{Quasi-multipliers of Hilbert $C^*$-bimodules in Kasparov's sense}
Let us begin by recalling the definition of Hilbert
$C^*$-bimodules in Kasparov's sense, which is the starting point
for $KK$-theory (cf., e.g., \cite{Kasparov80b}). Given two
$C^*$-algebras $A$ and $B$, one considers a right
$\mathbb{Z}/2\mathbb{Z}$-graded Hilbert $B$-module $V$ and a
$*$-homomorphism $\rho\colon  A\rightarrow
\End^*_B(V)^{(0)}$, where $\End^*_B(V)^{(0)}$ denotes the
$0$-homogeneous adjointable operators  in $V$. We will
additionally assume that this representation is faithful and
non-degenerate. Then, in particular, the $C^*$-algebra $\rho (A)$
is isomorphic to $A$, and its left action on $V$ is given by the
formula
\[
a\lhd x=\rho(a)(x),\quad a\in A, x\in V.
\]
The right action of $B$ on $V$ will sometimes be denoted by
\[
x\rhd b=xb, \quad b\in B, x\in V.
\]
Let us remark that, in fact, we may restrict our considerations
concerning (left, right or quasi) multipliers of $V$ to the
non-graded case, because $\End^*_B(V)^{(0)}=\End^*_B(V_1)\oplus
\End^*_B(V_2)$, where $V=V_1\oplus V_2$ means the given
$\mathbb{Z}/2\mathbb{Z}$-graduation of $V$. So henceforth we
assume that the module $V$ is non-graded and the (faithful,
non-degenerate) representation $\rho$ is of the form $\rho\colon
A\rightarrow \End^*_B(V).$

\begin{dfn}{\rm
Quasi-multipliers $QM_{(A,\rho,B)}(V)$ of $V$ are defined as the
set of all bimodule homo\-mor\-phisms from $A\times B$ to $V$,
i.e.
\[
QM_{(A,\rho,B)}(V)={\rm Hom}_{A,B}(A\times B, V).
\] }
\end{dfn}

The Banach space of quasi-multipliers $QM_{(A,\rho,B)}(V)$ carries
an $RM(A)$-$LM(B)$-bimodule structure via
\[
 (q \rhd \beta)(a,b) = q(a, \beta(b)) \quad ,
 \quad (\alpha \lhd q)(a,b) = q(\alpha(a), b)
\]
for $\alpha \in RM(A)$, $\beta \in LM(B)$.

\begin{prop}
$V$ is isometrically embedded into $QM_{(A,\rho,B)}(V)$ by the
bimodule map
\begin{gather}\label{eq:V_into_QM_A_rho_B(V)}
\Gamma_{(A,\rho,B)} \colon  V\rightarrow QM_{(A,\rho,B)}(V),\quad
\Gamma_{(A,\rho,B)}(x)(a,b):=a\lhd x\rhd b:=\rho(a)(xb).
\end{gather}
\end{prop}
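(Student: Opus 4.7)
The plan is to verify three things in sequence: that $\Gamma_{(A,\rho,B)}(x)$ actually lands in $\Hom_{A,B}(A\times B,V)$ for every $x\in V$; that $\Gamma_{(A,\rho,B)}$ is a linear $A$-$B$-bimodule map; and that $\Gamma_{(A,\rho,B)}$ is isometric. Injectivity then comes for free from isometry, giving the isometric embedding.

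For the first two points I would just do the routine algebra. Since $\rho$ is a $*$-homomorphism one has $\rho(ca)=\rho(c)\rho(a)$, and since $\rho(a)\in\End^*_B(V)$ is $B$-linear one may freely move scalars in and out of $\rho(a)$. Hence
\[
\Gamma_{(A,\rho,B)}(x)(ca,bd)=\rho(ca)(xbd)=\rho(c)\bigl(\rho(a)(xb)\,d\bigr)=c\lhd \Gamma_{(A,\rho,B)}(x)(a,b)\rhd d,
\]
which gives the required $A$-$B$-bilinearity of $\Gamma_{(A,\rho,B)}(x)$. The estimate $\|\rho(a)(xb)\|\le\|\rho(a)\|\,\|xb\|\le\|a\|\,\|x\|\,\|b\|$ shows both that $\Gamma_{(A,\rho,B)}(x)\in QM_{(A,\rho,B)}(V)$ and that $\|\Gamma_{(A,\rho,B)}(x)\|\le\|x\|$. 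The same identities, evaluated on arbitrary $(c,d)$, give
\[
\Gamma_{(A,\rho,B)}(ayb)(c,d)=\Gamma_{(A,\rho,B)}(y)(ca,bd)=(a\lhd\Gamma_{(A,\rho,B)}(y)\rhd b)(c,d),
\]
so $\Gamma_{(A,\rho,B)}$ is compatible with the $A$-$B$-bimodule structures.

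For the reverse inequality $\|\Gamma_{(A,\rho,B)}(x)\|\ge\|x\|$ I would invoke the non-degeneracy of $\rho$ combined with the standard fact that every right Hilbert $B$-module satisfies $xu_\beta\to x$ in norm for any contractive approximate unit $\{u_\beta\}$ of $B$. Fixing contractive approximate units $\{e_\alpha\}\subset A$ and $\{u_\beta\}\subset B$, non-degeneracy of $\rho$ gives $\rho(e_\alpha)y\to y$ for every $y\in V$. A standard $\varepsilon/2$ argument (choose $\beta$ so that $\|xu_\beta-x\|<\varepsilon/2$, then $\alpha$ so that $\|\rho(e_\alpha)(xu_\beta)-xu_\beta\|<\varepsilon/2$) yields
\[
\bigl\|\Gamma_{(A,\rho,B)}(x)(e_\alpha,u_\beta)-x\bigr\|=\bigl\|\rho(e_\alpha)(xu_\beta)-x\bigr\|<\varepsilon,
\]
and since $\|e_\alpha\|,\|u_\beta\|\le 1$ this forces $\|\Gamma_{(A,\rho,B)}(x)\|\ge\|x\|-\varepsilon$; letting $\varepsilon\to 0$ completes the isometry.

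The only nontrivial ingredient, and the main place the argument can fail, is the non-degeneracy hypothesis on $\rho$: without it one still has the upper estimate $\|\Gamma_{(A,\rho,B)}(x)\|\le\|x\|$, but the approximating sequence $\rho(e_\alpha)(xu_\beta)$ need not recover $x$, so the reverse inequality can be strict. Everything else is bookkeeping using that $\rho$ is a $*$-homomorphism into $\End^*_B(V)$ and that $\rho(a)$ commutes with the right $B$-action.
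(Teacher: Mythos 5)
Your proof is correct and follows essentially the same route as the paper: routine verification of bilinearity and the bimodule property, the easy bound $\|\Gamma_{(A,\rho,B)}(x)\|\le\|x\|$, and then the reverse inequality from non-degeneracy of $\rho$ via approximate units. The only cosmetic difference is that the paper first reduces to showing $\|x\|=\sup_{\|a\|\le 1}\|\rho(a)x\|$ on the dense subspace $\mathrm{span}\{\rho(a)y\}$, whereas you invoke the equivalent standard fact $\rho(e_\alpha)y\to y$ directly and run a joint $\varepsilon/2$ argument in $a$ and $b$.
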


\begin{proof} Given $x\in V$, $a, a'\in A$, $b, b'\in B$. Denote the quasi-multiplier
$\Gamma_{(A,\rho,B)}(x)$ by $q_x$ for brevity. Then
\begin{align*}
q_{a'\lhd x \rhd b'}(a,b)
&= \rho(a)(\rho(a')(xb')b)\\
&=q_x(\rho(a)\rho(a'),b'b)\\
&=(a'\lhd q_x\rhd b')(a,b),
\end{align*}
so $\Gamma_{(A,\rho,B)}$ is a bimodule map and it only remains to
check that it is an isometry. Then
\[
\|q_x\|=\sup\{\|\rho(a)(xb)\| : \|a\|\le 1, \|b\|\le 1, a\in A,
b\in B\}\le \|x\|
\]
and we have to show that this supremum achieves the value $\|x\|$. For
this it is enough to verify that
\begin{gather}\label{eq:KK_Gamma_isometry}
\|x\|=\sup\{\|\rho(a)(x)\| : \|a\|\le 1, a\in A\}.
\end{gather}
Because the representation $\rho$ is non-degenerate, the
sub-bimodule $W=\mathrm{span}\{\rho(a)(x) : a\in A, x\in V\}$ is
dense in $V$ and, consequently, we need to prove
(\ref{eq:KK_Gamma_isometry}) only for the vectors $x\in W$. So,
choose an arbitrary $x\in W$, i.e. $x=\sum \rho(a_i)y_i$ with
$y_i\in V$. Let $\{e_\alpha\}$ be an approximate unit of $A$. Then
$\rho(e_\alpha)x=\sum \rho(e_\alpha a_i)y_i$ converges to $x$, and
the supremum in (\ref{eq:KK_Gamma_isometry}) achieves the norm
$\|x\|$ on the approximate unit $\{\rho(e_\alpha)\}$ of $\rho(A)$.
\end{proof}

In fact, we may carry out these considerations even for the
category of Banach bimodules over $C^*$-algebras, which act
non-degenerately. More precisely, given two $C^*$-algebras $A$ and
$B$ and a Banach $A$-$B$-bimodule $X$ such that the following
conditions hold
\begin{gather}\label{eq:dfn_qm_ban_bimod1}
\overline{{\rm span}\{ax : a\in A, x\in X\}}=X
\end{gather}
and
\begin{gather}\label{eq:dfn_qm_ban_bimod2}
\overline{{\rm span}\{xb : b\in B, x\in X\}}=X.
\end{gather}
Then quasi-multipliers $QM(X)$ of $X$ are defined again as the set
$\Hom_{A,B}(A\times B,X)$.

\begin{lem}\label{lem:eq_cond_ban_QM}
The two conditions (\ref{eq:dfn_qm_ban_bimod1}) and
(\ref{eq:dfn_qm_ban_bimod2}) are equivalent to the following one
\[ \overline{{\rm span}\{axb : a\in A, b\in B, x\in X\}}=X.\]
\end{lem}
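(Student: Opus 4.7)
The plan is to show the two directions separately, with the forward direction (individual conditions imply joint density) being the substantive one.

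The easy direction is that the joint density condition implies each of the two one-sided ones: every element of the form $axb$ can be written either as $a(xb)$ (with $xb \in X$) or as $(ax)b$ (with $ax \in X$), so the span of $\{axb\}$ is contained in both $\overline{\mathrm{span}\,AX}$ and $\overline{\mathrm{span}\,XB}$. Density of the larger-looking set therefore forces density of each one-sided span.

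For the converse, I would use approximate units $\{e_\alpha\}$ of $A$ and $\{u_\beta\}$ of $B$ together with a standard $\epsilon/2$ argument. First, condition \eqref{eq:dfn_qm_ban_bimod1} and the uniform boundedness of $\{e_\alpha\}$ imply that $e_\alpha x \to x$ for every $x \in X$: on elements of the form $ay$, this is the norm continuity of the left action combined with $e_\alpha a \to a$, and the general case follows by a $3\epsilon$ argument using density of $AX$ in $X$. Symmetrically, condition \eqref{eq:dfn_qm_ban_bimod2} yields $x u_\beta \to x$ for all $x \in X$. Now fix $x \in X$ and $\epsilon > 0$. Pick $\alpha_0$ with $\|e_{\alpha_0} x - x\| < \epsilon/2$, and then $\beta_0$ with $\|(e_{\alpha_0} x) u_{\beta_0} - e_{\alpha_0} x\| < \epsilon/2$. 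Since the two actions commute, $e_{\alpha_0} x u_{\beta_0}$ is an element of the form $axb$ with $a \in A$, $b \in B$, and it is within $\epsilon$ of $x$.

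The only genuine subtlety is verifying that $e_\alpha x \to x$ holds for every $x \in X$ rather than merely on the dense subspace $\mathrm{span}\,AX$. This requires knowing that the net of left-multiplication operators $\{L_{e_\alpha}\}$ is uniformly bounded on $X$, which follows from the norm continuity of the left action and the fact that $\|e_\alpha\| \le 1$. Once this uniform bound is in hand, the standard approximation argument extends pointwise convergence from the dense subspace to all of $X$, and the rest of the proof is routine bookkeeping.
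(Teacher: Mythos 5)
Your proof is correct, but it takes a genuinely different route from the paper's. The paper argues directly with spans: given $y\in X$ and $\varepsilon>0$, it first uses \eqref{eq:dfn_qm_ban_bimod1} to approximate $y$ by $\sum_i a_i x_i$ within $\varepsilon$, then uses \eqref{eq:dfn_qm_ban_bimod2} to approximate each $x_i$ by $\sum_j z_j^{(i)} b_j^{(i)}$ within $\varepsilon/(\|a_i\| n)$, and concludes by the triangle inequality that $y$ is within $2\varepsilon$ of $\sum_{i,j} a_i z_j^{(i)} b_j^{(i)}$; the converse is dismissed as trivial, exactly as in your first paragraph. Your argument instead passes through approximate units and the standard uniform-boundedness/density upgrade to show $e_\alpha x\to x$ and $x u_\beta\to x$ for \emph{every} $x\in X$, and then exhibits $e_{\alpha_0}\,x\,u_{\beta_0}$ as a single elementary tensor close to $x$. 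Both are complete; the trade-off is that the paper's version is more elementary (no approximate units, no operator-norm estimate on $\{L_{e_\alpha}\}$, only the triangle inequality), while yours proves the formally stronger statement that every $x\in X$ is a norm limit of \emph{individual} elements $axb$ rather than merely of finite sums of such, which is in the spirit of the Cohen--Hewitt factorization circle of ideas. The one step you flag as the genuine subtlety --- uniform boundedness of the left-multiplication net --- is indeed the point that needs saying, and your justification via $\|e_\alpha\|\le 1$ together with the norm continuity (boundedness) of the action is adequate.
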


\begin{proof} Let $X$ satisfy both (\ref{eq:dfn_qm_ban_bimod1}) and
(\ref{eq:dfn_qm_ban_bimod2}) and let an arbitrary $y\in X$ and
$\varepsilon>0$ be given. There are $a_i\in A$, $x_i\in X$ such
that
\[
\biggl\|y-\sum_{i=1}^n a_ix_i\biggr\|<\varepsilon
\]
and for any $i$ there are $b_j^{(i)}\in B$, $z_j^{(i)}\in X$ such
that
\[
\biggl\|x_i-\sum_{j=1}^{m_i}
z_j^{(i)}b_j^{(i)}\biggr\|<\frac{\varepsilon}{\|a_i\|n}\,.
\]
Then
\[
\biggl\|y-\sum_{i=1}^n\sum_{j=1}^{m_i}
a_iz_j^{(i)}b_j^{(i)}\biggr\| \le \biggl\|y-\sum_{i=1}^n
a_ix_i\biggr\|+\biggl\|\sum_{i=1}^n
a_ix_i-\sum_{i=1}^n\sum_{j=1}^{m_i}a_iz_j^{(i)}b_j^{(i)}\biggr\|<2\varepsilon.
\]
The inverse implication of the lemma is trivial.
\end{proof}

\begin{prop}
$X$ is isometrically embedded into $QM(X)$ by the bimodule map
\[
\Gamma \colon  X\rightarrow QM(X),\quad  \Gamma(x)(a,b)=axb.
\]
\end{prop}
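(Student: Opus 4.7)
The plan is to split the statement into two parts: (a) that $\Gamma(x)$ lies in $QM(X)$ and that $\Gamma$ is an $A$-$B$-bimodule map with $\|\Gamma(x)\|\le\|x\|$, and (b) the reverse isometric estimate $\|x\|\le\|\Gamma(x)\|$. Part (a) is routine; part (b) is where the non-degeneracy hypotheses (\ref{eq:dfn_qm_ban_bimod1}) and (\ref{eq:dfn_qm_ban_bimod2}) must be used essentially, via an approximate-unit argument analogous to the one in the Kasparov case treated just above.

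For (a) I would simply note that for fixed $x\in X$ the map $(a,b)\mapsto axb$ is bilinear, and continuity of the module actions gives $\|axb\|\le\|a\|\,\|x\|\,\|b\|$, so $\Gamma(x)\in\Hom_{A,B}(A\times B,X)=QM(X)$ and $\|\Gamma(x)\|\le\|x\|$. Linearity of $\Gamma$ in $x$ is clear, and the bimodule identity $\Gamma(a'xb')=a'\lhd\Gamma(x)\rhd b'$ follows on evaluating both sides at an arbitrary pair $(a,b)$ and invoking the associativity of the $A$- and $B$-actions on $X$, exactly as in the verification given for $\Gamma_{(A,\rho,B)}$ in the previous section.

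For (b), I would fix contractive approximate units $\{e_\alpha\}\subset A$ and $\{u_\beta\}\subset B$. The decisive preparatory observation is that the non-degeneracy of the actions, together with $\|e_\alpha\|,\|u_\beta\|\le 1$, forces $e_\alpha x\to x$ and $xu_\beta\to x$ in norm for every $x\in X$: this is immediate on elements of the form $ay\in AX$ (resp.\ $yb\in XB$) since $e_\alpha ay\to ay$, and it extends to all of $X$ by an $\varepsilon/3$ approximation that uses the contractivity of the units. Combining the two one-sided convergences (and using Lemma \ref{lem:eq_cond_ban_QM}) yields $e_\alpha xu_\beta\to x$, and hence
\[
\|x\|=\lim_{\alpha,\beta}\|e_\alpha xu_\beta\|\le\sup\{\|axb\|:a\in A,\,b\in B,\,\|a\|\le 1,\,\|b\|\le 1\}=\|\Gamma(x)\|.
\]

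The only step requiring genuine care is this preparatory approximate-unit convergence lemma, which is classical for non-degenerate Banach modules over $C^*$-algebras but is not explicitly recorded in the paper; I would either give its one-line proof directly after fixing the approximate units, or cite a standard reference. Once that fact is in place the remainder of the argument is formally parallel to the proof already given for the Hilbert $C^*$-bimodule in Kasparov's sense.
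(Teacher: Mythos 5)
Your proof is correct and follows essentially the same route as the paper: the reverse inequality $\|x\|\le\|\Gamma(x)\|$ is obtained by an approximate-unit argument exploiting the non-degeneracy conditions (\ref{eq:dfn_qm_ban_bimod1})--(\ref{eq:dfn_qm_ban_bimod2}). The paper phrases it by first approximating $x$ by sums $\sum c_i y_i d_i$ via Lemma \ref{lem:eq_cond_ban_QM} and then applying $e_\alpha(\cdot)u_\beta$, whereas you first establish the standard one-sided convergences $e_\alpha x\to x$, $xu_\beta\to x$ by an $\varepsilon/3$ argument; these are interchangeable formulations of the same idea.
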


\begin{proof} We only have to check that for any $x\in X$ one has
\[
\|x\|=\sup\{\|axb\| : \|a\|\le 1, \|b\|\le 1, a\in A, b\in B\}.
\]
By Lemma \ref{lem:eq_cond_ban_QM} the vector $x$ may be
approximated in norm by vectors of the form $\sum c_iy_id_i$ with
$c_i\in A$, $y_i\in X$, $d_i\in B$. Then
\[
\|\sum c_iy_id_i\|=\sup\{\|e_\alpha \sum c_iy_id_iu_\beta\| : \alpha, \beta\},
\]
where $\{e_\alpha\}$ and $\{u_\beta\}$ stand for approximate units
in $A$ and $B$ respectively.
\end{proof}

\section{Quasi-multipliers of direct sums of bimodules}
Given two $C^*$-algebras $A$ and $B$ and a Hilbert
$A$-$B$-bimodule $V$. Consider another $A$-$B$-bimodule
$\widetilde{V}$ and a bimodule homomorphism $\theta\colon  V\rightarrow
\widetilde{V}$. Then there is a homomorphism $\theta_*\colon
QM(V)\rightarrow QM(\widetilde{V})$ of Banach
$RM(A)$-$LM(B)$-bimodules given by the formula
\[
\theta_*(q)=\theta q,\quad q\in QM(V).
\]
So, quasi-multipliers provide a covariant functor $QM$ from the
category of Hilbert $A$-$B$-bimodules to the category of Banach
$RM(A)$-$LM(B)$-bimodules. Obviously, these observations are still
valid for Banach (instead of Hilbert) $A$-$B$-bimodules, on which
both $C^*$-algebras $A$ and $B$ act non-degenerately. If $V$ is
given as a direct sum $V=V_1\oplus V_2$ of its (closed)
sub-bimodules $V_1$ and $V_2$, then one straightforwardly verifies
that $QM(V)=QM(V_1)\oplus QM(V_2)$, in other words the functor
$QM$ is additive. In particular, for the free $A$-$A$-bimodule
$A^n$ one has $QM(A^n)=QM(A)^n$.

Now we are investigating what happens with quasi-multipliers if we
map either $A$ or $B$ to other $C^*$-algebras. So, consider two
$C^*$-algebras $\widetilde{A}$ and $\widetilde{B}$ and two
surjective $*$-homomorphisms
\[
\varphi\colon  A\rightarrow \widetilde{A},\quad \psi\colon  B\rightarrow
\widetilde{B}.
\]
Assume $V$ is a Banach $\widetilde{A}$-$\widetilde{B}$-bimodule
equipped with non-degenerate actions of these $C^*$-algebras.
Define a left action $\lhd_\varphi$ of $A$ twisted by $\varphi$
and right action $\rhd_\psi$ of $B$ twisted by $\psi$ on $V$ as
follows
\[
a\lhd_\varphi x=\varphi(a)\lhd x,\quad x\rhd_\psi b=x\rhd \psi(b),
\]
where $a\in A, b\in B$, $x\in V$. Surjectivity of $\varphi$ and
$\psi$ ensures that these actions are non-degenerate. Then
$(V,\lhd_\varphi,\rhd_\psi)$ is a Banach $A$-$B$-bimodule and
quasi-multipliers of this bimodule are called \emph{twisted
quasi-multipliers} of the original $\tilde A$-$\tilde B$-bimodule
$(V,\lhd,\rhd)$ and are denoted by $QM_{(\varphi,\psi)}(V)$.

With this construction, quasi-multipliers are contravariant in
both variables $A$ and $B$.

Now we are going to study the behavior of the functor $QM$ with
respect to infinite direct sums of bimodules. As a corollary, in
particular, we will obtain a description of quasi-multipliers for
the standard $A$-$A$-bimodule $l_2(A)$. So given $A$-$B$-bimodules
$V_i$. Obviously, for a sequence $(x_i), x_i\in V_i$ the series
$\sum_i {_A}\langle x_i,x_i\rangle$ converges in norm if and only
if the series $\sum_i \langle x_i,x_i\rangle_B$ does, moreover,
their norms have to coincide. Set
\[
V=\Bigl\{(x_i) : x_i\in V_i, \sum_i {_A}\langle x_i,x_i\rangle
 \text{ converges in norm}\Bigr\}.
\]
Then $V$ is a Hilbert $A$-$B$-bimodule with respect to the inner
products
\[
{_A}\langle x,y\rangle=\sum_i {_A}\langle
x_i,y_i\rangle\quad\text{and}\quad \langle x,y\rangle_B=\sum_i
\langle x_i,y_i\rangle_B,
\]
where $x=(x_i), y=(y_i)\in V$ (cf. \cite[Example
1.3.5]{MaTroBook}).

\begin{teo} Set
\begin{align*}
W=\Bigl\{(q_i) & :   q_i\in QM(V_i), \text{ the norms of the
operators }\\& \rho_n=(q_1,\dots,q_n,0,\dots)\colon  A\times
B\rightarrow \oplus_{i=1}^n V_i \text{ are uniformly bounded over } n,\\
&\text{and } (q_i(a,b))\in V \text{ for any } a\in A, b\in
B\Bigr\}.
\end{align*}
In particular, if $(q_i)\in W$ then both series  $\sum_i
{_A}\langle q_i(a,b),q_i(a,b)\rangle$ and $\sum_i \langle
q_i(a,b),q_i(a,b)\rangle_B$ converge in norm for any $a\in A, b\in
B$. Then $W$, with norm defined by
  \eqref{eq:def_norm} below, is a Banach
$RM(A)$-$LM(B)$-bimodule with entry-wise action, isometrically
isomorphic to the Banach $RM(A)$-$LM(B)$-bimodule $QM(V)$.
\end{teo}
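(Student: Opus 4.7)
The plan is to produce mutually inverse isometric $RM(A)$-$LM(B)$-bimodule maps $\Phi\colon QM(V)\to W$ and $\Psi\colon W\to QM(V)$ by decomposing a quasi-multiplier into its components along the direct sum and, conversely, reassembling a family in $W$ into a single quasi-multiplier.

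First I would observe that each coordinate projection $\pi_i\colon V\to V_i$, $(x_j)\mapsto x_i$, is a contractive $A$-$B$-bimodule homomorphism: the bimodule property is immediate from the entry-wise actions, and contractivity follows from the inequality $\|{_A}\langle x_i,x_i\rangle\|\le\|\sum_j{_A}\langle x_j,x_j\rangle\|$ in the positive cone of $A$. Hence for $q\in QM(V)$ one may set $q_i:=\pi_i\circ q\in QM(V_i)$ and $\Phi(q):=(q_i)$. To see that $\Phi(q)$ lies in $W$, note that $\rho_n(a,b)$ is literally the truncation of the element $q(a,b)\in V$, so $\|\rho_n(a,b)\|\le\|q(a,b)\|\le\|q\|\,\|a\|\,\|b\|$, yielding $\sup_n\|\rho_n\|\le\|q\|$, while $(q_i(a,b))=q(a,b)\in V$ automatically.

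For the inverse direction, given $(q_i)\in W$ I would define $\Psi((q_i))(a,b):=(q_i(a,b))$; the defining condition of $W$ guarantees $V$-valuedness, and bilinearity and the identity $\Psi((q_i))(ca,bd)=c\,\Psi((q_i))(a,b)\,d$ transfer componentwise from the $q_i$. Boundedness follows from
\[
\|(q_i(a,b))\|_V^2=\Bigl\|\sum_i{_A}\langle q_i(a,b),q_i(a,b)\rangle\Bigr\|=\lim_n\|\rho_n(a,b)\|^2\le\Bigl(\sup_n\|\rho_n\|\Bigr)^2\|a\|^2\|b\|^2,
\]
so $\Psi((q_i))\in QM(V)$ with $\|\Psi((q_i))\|\le\sup_n\|\rho_n\|$. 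The identities $\Psi\circ\Phi=\mathrm{id}$ and $\Phi\circ\Psi=\mathrm{id}$ are immediate from the componentwise descriptions, and the $RM(A)$-$LM(B)$-actions are respected because pre- and post-composition with the arguments of $q$ commutes with the projections $\pi_i$.

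Combining the two inequalities above yields $\|q\|_{QM(V)}=\sup_n\|\rho_n\|$, which fixes the interpretation of the norm~\eqref{eq:def_norm} on $W$ and simultaneously delivers the isometry claim. The main technical point to watch is that the finite direct sum $\bigoplus_{i=1}^n V_i$, taken as the target of $\rho_n$ in the statement, carries exactly the Hilbert bimodule norm it inherits as a sub-bimodule of $V$; it is this compatibility that lets me identify $\|\rho_n(a,b)\|$ with the norm of the truncation of $(q_i(a,b))$ inside $V$, which in turn drives both the easy bound $\|\rho_n\|\le\|q\|$ and, via norm convergence of the partial sums $\sum_{i\le n}{_A}\langle q_i(a,b),q_i(a,b)\rangle$ in the Hilbert direct sum, its converse. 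Beyond this identification the remaining verifications are routine componentwise computations.
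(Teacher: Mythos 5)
Your proposal is correct and follows essentially the same route as the paper's proof: decomposing $q\in QM(V)$ via the coordinate projections $p_i$ and reassembling a family $(q_i)\in W$ into the quasi-multiplier $(a,b)\mapsto(q_i(a,b))$, with the isometry coming from the identification of $\|q(a,b)\|_V$ with the limit of the truncated norms. You are in fact somewhat more explicit than the paper (which dismisses the isometry as obvious) in justifying $\sup_n\|\rho_n\|=\|q\|_{QM(V)}$ via monotone convergence of the positive partial sums $\sum_{i\le n}{}_A\langle q_i(a,b),q_i(a,b)\rangle$.
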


\begin{proof} Suppose $r\in RM(A)$, $l\in LM(B)$ and $q=(q_i)\in
W$. Then
\[
\sum_i {_A}\langle (r\lhd q_i\rhd l)(a,b),(r\lhd q_i\rhd
l)(a,b)\rangle=\sum_i {_A}\langle
q_i(r(a),l(b)),q_i(r(a),l(b))\rangle
\]
and $r\lhd q \rhd l:=(r\lhd q_i \rhd l)$ belongs to $W$. Set
\[
\|q(a,b)\|:=\|\sum_i {_A}\langle q_i(a,b),q_i(a,b)\rangle\|^{1/2}
\]
and
\begin{equation} \|q\|:=\sup\{\|q(a,b)\| : \|a\|\le 1,
\|b\|\le 1, a\in A, b\in B\}.\label{eq:def_norm}
\end{equation}
This supremum is finite, because $q$ is a point-wise limit of the
sequence
\[
\{\rho_n=(q_1,\dots,q_n,0,\dots)\}
\]
 and $\|\rho_n\|\le C$ for
any $n$. Thus, $W$ is a normed $RM(A)$-$LM(B)$-bimodule. Note,
moreover, that $q$ considered as a map
  $q\colon A\times B\to V$ is bounded and thus a quasi-multiplier.

An isometric isomorphism $\Phi \colon  QM(V)\rightarrow W$ may be
defined in the following way. Denote by $p_i\colon  V\rightarrow
V_i$ the natural projection and consider any quasi-multiplier
$T\in QM(V)$, i.e. $T\colon  A\times B\rightarrow V$. Then,
clearly, $T_i=p_iT$ belongs to $QM(V_i)$ for any $i$ and the
sequence $\{F_n=(T_1,\dots,T_n,0,\dots)\}\subset QM(V)$
quasi-strictly converges to $T$. By definition set $\Phi
(T)=(T_i)$.

Because $T(a,b)=(T_1(a,b),T_2(a,b),\dots)\in \oplus V_i$ for any
$a\in A, b\in B$, the sequence $(T_i)$ belongs to $W$. Obviously,
$\Phi$ is an isometry. Now take an arbitrary $(q_i)\in W$. Define
$T(a,b):=(q_1(a,b),q_2(a,b),\dots)$ for $a\in A, b\in B$. Then $T$
is an element of $QM(V)$ and $\Phi(T)=(q_i)$, proving surjectivity
of $\Phi$.
\end{proof}

\begin{cor} Quasi-multipliers of the standard bimodule $l_2(A)$
over a $C^*$-algebra $A$ coincide with the set of sequences
$\{(q_i),\, q_i\in QM(A)\}$ such that the norms of
$\{\oplus_{i=1}^n q_i\}$ are uniformly bounded over $n$ and
$\sum_i (aq_ic)^*(aq_ic)$ converges in norm for any $a,c\in A$
$\Box$
\end{cor}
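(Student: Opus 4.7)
The plan is to view this corollary as a direct specialization of the preceding theorem to the situation $V_i = A$ for all $i$, with $B = A$. Each copy of $A$ is an involutive Hilbert $A$-bimodule via the inner products of Example 2.8, and with these inner products the construction $V = \{(x_i) : \sum_i {}_A\langle x_i, x_i\rangle \text{ converges in norm}\}$ of the theorem is exactly the standard bimodule $l_2(A)$, since ${}_A\langle x_i, x_i\rangle = x_i x_i^*$ converges iff $\langle x_i, x_i\rangle_A = x_i^* x_i$ does. Thus the theorem provides an isometric isomorphism $QM(l_2(A)) \cong W$, and it only remains to transcribe the defining conditions of $W$ into the form stated in the corollary.

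First I would note that $QM(V_i) = QM(A)$ for every $i$, so a point of $W$ is indeed a sequence $(q_i)$ with $q_i \in QM(A)$. The uniform boundedness of the partial operators $\rho_n = (q_1,\dots,q_n,0,\dots) \colon A \times A \to \oplus_{i=1}^n A$ is exactly the uniform boundedness of $\oplus_{i=1}^n q_i$ in $QM(A^n)$, since $QM(A^n) = QM(A)^n$ by the additivity of $QM$ recorded at the start of Section 4, and these norms are computed from the same supremum $\sup\{\|\oplus_{i=1}^n q_i(a,c)\| : \|a\|,\|c\| \le 1\}$.

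Next I would translate the condition $(q_i(a,b)) \in V$ for all $a,b \in A$. Writing $b = c$ and using the identification of a quasi-multiplier $q_i \in QM(A)$ with the bilinear map $(a,c) \mapsto a q_i c$ (as in the remark after Definition 2.10, or via the $V^{**}$-picture of Brown--Mingo--Shen), membership in $V = l_2(A)$ is precisely the convergence in norm of $\sum_i \langle q_i(a,c), q_i(a,c)\rangle_A = \sum_i (a q_i c)^* (a q_i c)$ for all $a,c \in A$. Combining these two transcriptions gives exactly the description of $QM(l_2(A))$ stated in the corollary, so no further argument is needed beyond appealing to the preceding theorem. There is no serious obstacle here; the only point that requires care is checking that the norm on $W$ used in the theorem matches the operator norm on $QM(l_2(A))$ when restricted to the finite truncations, which follows immediately from the definition \eqref{eq:def_norm}.
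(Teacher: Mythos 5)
Your proposal is correct and matches the paper's intent exactly: the corollary is stated with a terminal $\Box$ precisely because it is the immediate specialization of the preceding direct-sum theorem to $V_i=A$ (so $V=l_2(A)$, $QM(V_i)=QM(A)$), with the two defining conditions on $W$ transcribed as uniform boundedness of $\oplus_{i=1}^n q_i$ and norm convergence of $\sum_i (aq_ic)^*(aq_ic)=\sum_i\langle q_i(a,c),q_i(a,c)\rangle_A$. No further comparison is needed.
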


Let $V$ be a right Hilbert module over a $C^*$-algebra $B$. Then
its multipliers were defined in \cite{Bakic, RT02} as
$\Hom^*_B(B,V)$. It is a Hilbert module over the $C^*$-algebra
$M(B)$. Likewise, the left multipliers of $V$ were defined in
\cite{FraPav07} as $\Hom_B(B,V)$ being a Banach module over the
Banach algebra $LM(B)$. The arguments above imply the following
assertion.

\begin{teo} Assume that $V=\oplus V_i$ is a direct sum of Hilbert
    $B$-modules
$V_i$. Then
\begin{align*}
    LM(V)=\Bigl\{(\lambda_i) &: \lambda_i\in LM(V_i), \text{ the norms of the
operators }\\& \theta_n=(\lambda_1,\dots,\lambda_n,0,\dots)\colon
B\rightarrow \oplus_{i=1}^n V_i \text{ are uniformly bounded over } n,\\
    &\text{and } (\lambda_i(b))\in V \text{ for any } b\in B\Bigr\},\\
    M(V)=\Bigl\{(\mu_i) &: \mu_i\in M(V_i), \text{ the norms of the
operators }\\&\tau_n=(\mu_1,\dots,\mu_n,0,\dots)\colon  B\rightarrow
\oplus_{i=1}^n V_i
 \text{ are uniformly bounded over } n,\\ & \text{and
}(\mu_i(b))\in V \text{ for any } b\in B\Bigr\}.\quad\Box
\end{align*}
\end{teo}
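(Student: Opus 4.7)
The plan is to mirror the proof of the preceding theorem on direct sums of quasi-multipliers, with $B$-linear maps $B\to V$ (respectively adjointable such maps) in place of bilinear maps on $A\times B$. The key structural tools are the orthogonal projections $p_i\colon V\to V_i$ onto the summands and the inclusions $\iota_i\colon V_i\hookrightarrow V$; these are automatically adjointable $B$-module maps of norm one, and the partial projection $p_{\le n}=\sum_{i=1}^n\iota_ip_i$ is again an orthogonal projection, hence also of norm one.

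First I would define the comparison map $\Phi_{LM}\colon LM(V)\to W_{LM}$ by $\Phi_{LM}(\lambda)=(p_i\lambda)$, and check that it lands in $W_{LM}$: the truncation $\theta_n$ is just $p_{\le n}\lambda$, so $\|\theta_n\|\le\|\lambda\|$, and $(p_i\lambda(b))$ is nothing but the component decomposition of the element $\lambda(b)\in V$. The isometry of $\Phi_{LM}$ then follows from the identity $\|v\|=\|\sum_i\langle v_i,v_i\rangle_B\|^{1/2}$ for the Hilbert norm on $V$, exactly as in the $QM$ case. For surjectivity, given $(\lambda_i)\in W_{LM}$, I assemble $\lambda(b):=(\lambda_i(b))$; the two conditions in the definition of $W_{LM}$ guarantee that $\lambda$ is a well-defined bounded $B$-linear map $B\to V$ with $\Phi_{LM}(\lambda)=(\lambda_i)$. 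This whole thread runs in close parallel with the earlier $QM$ argument and should pose no serious difficulty.

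The only new issue arises for $M(V)$, where one must also recover adjointability of $\mu$ from that of its components. In one direction this is immediate: if $\mu\in M(V)$ has adjoint $\mu^*$, then $\mu^*\iota_i$ is an adjoint of $\mu_i=p_i\mu$, so $\Phi_M(\mu)=(p_i\mu)$ lies in $W_M$. The nontrivial direction is to start from $(\mu_i)\in W_M$ with individual adjoints $\mu_i^*\colon V_i\to B$, assemble $\mu(b):=(\mu_i(b))$ as before, and construct an adjoint $\mu^*\colon V\to B$ for the assembled map. The natural candidate is $\mu^*(v):=\sum_i\mu_i^*(p_iv)$, and showing that this series converges in $B$ for every $v\in V$ is the main obstacle I anticipate.

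This convergence will follow from the uniform boundedness hypothesis via the standard adjoint trick: the adjoint of $\tau_n\colon B\to\bigoplus_{i=1}^n V_i$ has the same norm as $\tau_n$, so the partial sums $s_n=\sum_{i=1}^n\mu_i^*(v_i)$ satisfy $\|s_n-s_m\|\le C\,\|(v_{m+1},\dots,v_n)\|$, and the tail norms tend to zero because $\sum_i\langle v_i,v_i\rangle_B$ converges by the very definition of $V$. Once $\mu^*$ is defined, the adjointness relation $\langle\mu(b),v\rangle_B=\langle b,\mu^*(v)\rangle_B$ follows by passing to the limit in the finite-level identities for the $\tau_n$. With $\Phi_M$ in place, the isometry and bijectivity statements are then handled exactly as for $\Phi_{LM}$.
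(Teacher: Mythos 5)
Your proposal is correct and follows exactly the route the paper intends: the paper gives no separate proof, merely asserting that ``the arguments above'' (the quasi-multiplier direct-sum theorem) carry over, which is precisely your $\Phi_{LM}$/$\Phi_M$ construction via the projections $p_i$. Your treatment of the one genuinely new point --- assembling the adjoint $\mu^*(v)=\sum_i\mu_i^*(p_iv)$ and deriving its convergence from $\|\tau_n^*\|=\|\tau_n\|\le C$ together with the Cauchy criterion for $\sum_i\langle v_i,v_i\rangle_B$ --- is sound and correctly fills in the detail the paper leaves implicit.
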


This theorem in its part concerning multipliers generalizes
\cite[Theorem 2.1]{Bakic}, where the crucial case of the standard
module was considered. Our description being applied to $V=l_2(A)$
differs from the one of \cite{Bakic}, but is just its equivalent
reformulation. Indeed, let $V=l_2(A)$, $m_i\in M(A)$ and the
sequence $\{\tau_n=(m_1,\dots,m_n,0,\dots)\}$ be given. Then one
has
\begin{align}\label{eq:mult_descr_comp}
    \|\tau_n\|^2&=\sup\{\|\langle \tau_n(a), \tau_n(a)\rangle\| : a\in A, \|a\|\le
    1\}\notag\\&=\sup\Bigl\{\Bigl\|\sum_{i=1}^n m_i(a)^*m_i(a)\Bigr\|: a\in A, \|a\|\le
    1\Bigr\}\\&=\sup\Bigl\{\Bigl\|\sum_{i=1}^n a^*m_i^*m_ia\Bigr\|: a\in A, \|a\|\le
    1\Bigr\}\notag\\&=\Bigl\|\sum_{i=1}^n m_i^*m_i\Bigr\|.\notag
\end{align}
Now, \cite[Theorem 2.1]{Bakic} claims that
\[
M(l_2(A))=\Bigl\{(m_n) : m_n\in M(A), \sum am_n^*m_n, \sum
m_n^*m_na \text{ converge in } A \text{ for any } a\in A\Bigr\}.
\]
But the norm-convergence of a series $\sum a^*m_n^*m_na$ and
uniform boundedness of the sequence $\{\|\sum m_n^*m_n\|\}$ (say
by a constant $C$), which is ensured by the equality
(\ref{eq:mult_descr_comp}), imply the norm convergence of the
series $\sum ax_n^*x_n$ and $\sum x_n^*x_na$ because of the
Cauchy-Schwartz inequality
\[
\Bigl\|\sum m_n^*m_na\Bigr\|\le \Bigl\|\sum m_n^*m_n\Bigr\|^{1/2}
\cdot \Bigl\|\sum a^*m_n^*m_na\Bigr\|^{1/2} \le \Bigl\|\sum
a^*m_n^*m_na\Bigr\|^{1/2}C^{1/2}.
\]

\section{Quasi-multipliers of continuous sections
of Hilbert $C^*$-bimodule bundles}
Given a locally compact Hausdorff space $X$. For the commutative
$C^*$-algebra $C_0(X)$ of continuous functions on $X$ vanishing at
infinity its set of multipliers (as well as its set of left (or
right) multipliers and quasi-multipliers) coincides with the
$C^*$-algebra $C_b(X)$ of bounded continuous functions on $X$. On
the other hand $C_b(X)$ is nothing else but the $C^*$-algebra
$C(\beta X)$ of continuous functions on the Stone-\v{C}ech
compactification of $X$ (cf. \cite[3.12.6]{Pedersen}). This result
was extended in \cite{APT73} to $C^*$-algebras $A_0(X)=C_0(X,A)$
of continuous $A$-valued functions vanishing at infinity, where
$A$ is a $C^*$-algebra (actually in \cite{APT73} there was
considered the even  more general case of continuous cross
sections of fiber spaces). Denote by $M(A)_\beta$ the
$C^*$-algebra of multipliers of $A$, equipped with the strict
topology, and by $C_b(X,M(A)_\beta)$ the set of continuous bounded
$M(A)$-valued functions on $X$. Then
\begin{gather}\label{eq:APT73}
M(A_0(X))=C_b(X,M(A)_\beta)
\end{gather}
(cf.~\cite[Corollary 3.4]{APT73}). But $C_b(X,M(A)_\beta)$
\emph{is not} isomorphic to $C(\beta X, M(A)_\beta)$, because
$C(\beta X) \otimes M(A) = M(C_0(X)) \otimes M(A) \subsetneqq
M(C_0(X) \otimes A) = M(A_0(X))$ whenever $X$ is
$\sigma$-compact, $A$ is infinite dimensional
and the tensor products are considered with respect to the
minimal (spatial) norm, \cite[Theorem 3.8]{APT73}.

And in turn formula (\ref{eq:APT73}) was extended in
\cite{Frank85} in the following way. Let $V$ be a Hilbert
$A$-module and $V_0(X)=C_0(X,V)$ be the set of continuous
$V$-valued functions vanishing at infinity. It is, obviously, a
Hilbert $A_0(X)$-module. Denote by $\End^*_A(V)_\beta$ the
$C^*$-algebra of all $A$-linear bounded adjointable operators in
$V$, equipped with the $*$-strict module topology (cf. \cite[\S
5.5]{MaTroBook}). Then
\begin{gather}\label{eq:Fra85}
\End^*_{A_0(X)}(V_0(X))=C_b(X,\End^*_A(V)_\beta).
\end{gather}
Because by Kasparov's theorem $\End^*_A(V)=M(K_A(V))$ (cf.
\cite{Kasparov80}) for any Hilbert $A$-module $V$, where $K_A(V)$
stands for the $C^*$-algebra of compact operators of $V$, the
formula (\ref{eq:APT73}) is a particular case of (\ref{eq:Fra85})
for $V=A$. Our aim in this paragraph is to find the proper
analogue of formula (\ref{eq:APT73}) for quasi-multipliers of
continuous sections of Hilbert $C^*$-bimodule bundles.

In order to define this notion, take a locally compact Hausdorff
space $X$ and two $C^*$-algebras $A$ and $B$, set $A_0(X) :=
C_0(X,A)$ and $B_0(X) := C_0(X,B)$. Equipped with the supremum
norm, these are again $C^*$-algebras.

In view of the above observations we want sections in our still to
be defined bundles of Hilbert $A$-$B$-bimodules to form a Hilbert
$A_0(X)$-$B_0(X)$-bimodule with the inner product induced by the
pointwise operations in the fibers. The corresponding structure
group should therefore reduce to unitary $A$-$B$-linear operators,
which raises the question whether these are well-defined, since we
have two
 inner products. This is settled by the following lemma.
\begin{lem}
Let $V$ be a Hilbert $A$-$B$-bimodule and $T \in \End_{A,B}(V)$
be a bounded $A$- and $B$-linear operator, which has an adjoint
$T^{*,B}$ for the $B$-valued inner product. Then $T^{*,B}$
coincides with the adjoint of $T$ for the $A$-valued inner
product (i.e. $T^{*,A} = T^{*,B}$).
\end{lem}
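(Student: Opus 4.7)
The plan is to verify that $T^{*,B}$ itself satisfies the defining identity of the $A$-valued adjoint of $T$, namely
\[
    {_A}\langle Tx, y\rangle \;=\; {_A}\langle x, T^{*,B} y\rangle
    \qquad \text{for all } x, y \in V.
\]
Since the $A$-adjoint of $T$ is unique whenever it exists, this will force $T^{*,A} = T^{*,B}$.

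The key tool is the compatibility identity ${_A}\langle u, v\rangle w = u \langle v, w\rangle_B$, which turns an $A$-valued identity into a $V$-valued one after acting on an arbitrary $z \in V$. I would test the desired equality by right-multiplying (via the left $A$-action) by such a $z$. On one side, compatibility gives ${_A}\langle Tx, y\rangle z = Tx \cdot \langle y, z\rangle_B$. On the other, compatibility combined with the $B$-adjoint relation (which, after passing to the $B$-involution, reads $\langle T^{*,B}y, z\rangle_B = \langle y, Tz\rangle_B$) gives ${_A}\langle x, T^{*,B}y\rangle z = x \cdot \langle y, Tz\rangle_B$. Re-applying compatibility converts this into ${_A}\langle x, y\rangle \cdot Tz$; pulling $T$ outside using its $A$-linearity, invoking compatibility once more, and finally using $B$-linearity of $T$ yields $T(x \cdot \langle y, z\rangle_B) = Tx \cdot \langle y, z\rangle_B$, matching the first side. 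Therefore $({_A}\langle Tx, y\rangle - {_A}\langle x, T^{*,B}y\rangle) \cdot z = 0$ for every $z \in V$.

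The main (and essentially only) obstacle is passing from this pointwise vanishing to the equality in $A$. This relies on faithfulness of the left $A$-action on $V$: if $c \in A$ annihilates every $z \in V$, then ${_A}\langle cv, w\rangle = c \cdot {_A}\langle v, w\rangle = 0$ for all $v, w$, so $c$ kills the ideal generated by the range of ${_A}\langle \cdot, \cdot\rangle$, and under the standard fullness assumption for a Hilbert $A$-module this forces $c = 0$. Granted this, the identity ${_A}\langle Tx, y\rangle = {_A}\langle x, T^{*,B}y\rangle$ holds in $A$, and the lemma follows.
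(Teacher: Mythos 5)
Your computation is exactly the paper's: the chain of identities using the compatibility relation ${_A}\langle u,v\rangle\,w = u\,\langle v,w\rangle_B$, the $B$-adjoint relation, and the $A$- and $B$-linearity of $T$ correctly yields $\bigl({_A}\langle Tx,y\rangle - {_A}\langle x,T^{*,B}y\rangle\bigr)z = 0$ for all $z\in V$. The gap is in the final step. The lemma's hypotheses do not include fullness of $V$ as a left Hilbert $A$-module, nor faithfulness of the left action, and neither is part of the paper's definition of a Hilbert $A$-$B$-bimodule (for instance, $A=\mathbb{C}^2$ acting on $V=\mathbb{C}$ through the first coordinate is a legitimate left Hilbert $A$-module in which a nonzero element of $A$ annihilates all of $V$). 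So the appeal to ``the standard fullness assumption'' proves the lemma only for full modules, which is strictly weaker than what is claimed.

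The repair is cheap and is what the paper actually does: the element $a = {_A}\langle Tx,y\rangle - {_A}\langle x,T^{*,B}y\rangle$ is not an arbitrary element of $A$ killing $V$; it lies in the closed ideal $I = \overline{{\rm span}}\;{_A}\langle V,V\rangle$. From $az=0$ for all $z\in V$ one gets $a\,{_A}\langle v,w\rangle = {_A}\langle av,w\rangle = 0$ for all $v,w\in V$, i.e.\ $a$ annihilates $I$; since $a^*\in I$ this gives $aa^*=0$ and hence $a=0$ by the $C^*$-identity (equivalently, $ae_\lambda=0$ for an approximate unit $\{e_\lambda\}$ of $I$ while $ae_\lambda\to a$ --- the approximate-unit argument of Brown--Mingo--Shen, Remark 1.9, which the paper cites). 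With that substitution in place of the fullness assumption, your argument is complete.
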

\begin{proof}
We follow \cite[Remark 1.9]{BrMinShen}. Let $x,y,z \in V$, then we
have
\begin{eqnarray*}
_A\langle x,\,T^{*,B}\,y \rangle\,z
&=& x\,\langle T^{*,B}\,y,\,z\rangle_B
= x\,\langle y,\,T\,z\rangle_B
= \,_A\langle x,\, y \rangle \,T\,z
= T\left(_A\langle x,\,y \rangle\,z\right) \\
&=& T\left(x\,\langle y,\,z \rangle_B\right) = Tx\,\langle y,\,z
\rangle_B = \,_A\langle Tx,\,y \rangle\,z
\end{eqnarray*}
Clearly $a = \,_A\langle x,\,T^{*,B}\,y \rangle - \,_A\langle
Tx,\,y \rangle \in \,_A\langle V, V \rangle$, where the latter
denotes the closure of the linear span of all possible $A$-valued
inner products. Moreover $az = 0$ for all $z \in V$ by the
previous calculation. This implies $a = 0$ by the approximate unit
argument given in \cite[Remark 1.9]{BrMinShen}.
\end{proof}

\begin{dfn}
Let $V$ be a Hilbert $A$-$B$-bimodule. By the above lemma, the
adjointable $A$-$B$-linear operators $\End^*_{A,B}(V)$ are
well-defined. Denote the unitary elements in this $C^*$-algebra
by $U_{A,B}(V)$.
\end{dfn}

\begin{dfn}
Given a locally compact Hausdorff space $X$ and a Hilbert
$A$-$B$-bimodule $V$. A Hilbert $A$-$B$-bimodule bundle
$\mathcal{V}$ over $X$ with typical fiber $V$ is a
triple $(\mathcal{V},p,X)$, where $\mathcal{V}$ is a
Hausdorff space and $p \colon \mathcal{V} \rightarrow X$
maps $\mathcal{V}$ onto $X$ such that the following holds:
\begin{enumerate}[(i)]
\item \label{it:fib} there is an open cover
$\{U_i\}_{i \in I}$ of $X$ such that there exist
homeomorphisms
\[
\varphi_i \colon p^{-1}(U_i) \longrightarrow U_i \times V
\]
with $\text{\rm pr}_1 \circ \varphi_i = \left.p\right|_{p^{-1}(U_i)}$.
\item \label{it:unitary} let $\overline{\varphi}_{ij}$ be defined
via $\varphi_j \circ \varphi_i^{-1}(x,v) = (x,
\overline{\varphi}_{ij}(x)(v))$ for $x \in U_i \cap U_j$
and $v \in V$, then $\overline{\varphi}_{ij}$ is a
continuous map
\[
\overline{\varphi}_{ij} \colon U_i \cap U_j \longrightarrow
U_{A,B}(V)\ .
\]
\end{enumerate}
\end{dfn}

Condition (\ref{it:fib}) implies that $\mathcal{V}$ is fiberwise
isomorphic to $V$, condition (\ref{it:unitary}) encodes the
reduction of the structure group to the unitary operators. The
continuous sections $\mathcal{V}_0(X) = C_0(X, \mathcal{V})$
indeed yield a $A_0(X)$-$B_0(X)$-bimodule. Let $x \in X$ be in the
set $U_i$ of the cover, then  there is an $A_0(X)$-valued
inner product on $\mathcal{V}_0(X)$ defined via
\[
_{A_0(X)}\langle \sigma,\,\tau \rangle (x) =\,_A\langle
\mathrm{pr}_2\circ\varphi_i \circ \sigma
(x),\,\mathrm{pr}_2\circ\varphi_i \circ \tau (x)\rangle,
\]
where $\mathrm{pr}_2$ stands for the projection of $U_i\times V$
onto $V$. This does not depend on the particular choice of $(U_i,
\varphi_i)$. Indeed, if $x$ lies in $U_i \cap U_j$ we have:
\begin{align*}
\,_A\langle \mathrm{pr}_2\circ\varphi_i \circ \sigma
(x),\,\mathrm{pr}_2\circ\varphi_i \circ \tau (x)\rangle &=
\,_A\langle\,\overline{\varphi}_{ji}(x)(pr_2\circ\varphi_j \circ
\sigma (x)),\,\overline{\varphi}_{ji}(x)(pr_2\circ\varphi_j \circ
\tau (x))\rangle
 \\& =\,_A\langle \mathrm{pr}_2\circ\varphi_j \circ \sigma
(x),\,\mathrm{pr}_2\circ\varphi_j \circ \tau (x)\rangle
\end{align*}
due to the unitarity of the structure group. There is a similar
$B_0(X)$-valued inner product on $\mathcal{V}_0(X)$. With these
additional structures $\mathcal{V}_0(X)$ is indeed an
$A_0(X)$-$B_0(X)$-bimodule.

Associated to $\mathcal{V}$ we have the bundle of
quasi-multipliers $QM(\mathcal{V})$. To define this, note that for
a unitary $u \in U_{A,B}(V)$ and a quasi-multiplier $q \in QM(V)$
the map $u \circ q$ is again a quasi-multiplier due to the
$A$-$B$-linearity of $u$. Therefore the space
\[
\coprod_{i \in I} U_i \times QM(V)
\]
may be equipped with the equivalence relation
\[
(x,q) \sim (x,\overline{\varphi}_{ij}(x)\circ q),
\]
where $i,j \in I, x \in U_{ij}$ and $q \in QM(V)$. The quotient
$QM(\mathcal{V}) = \coprod_{i \in I} U_i \times QM(V) / \sim$ is a
locally trivial bundle with typical fiber $QM(V)$. Moreover the
canonical map $\iota \colon V \rightarrow QM(V)$ extends to a
bundle morphism
\[
\mathcal{V} \longrightarrow QM(\mathcal{V}) \; ; \quad v \mapsto
[x, \iota \circ \mathrm{pr_2}\circ \varphi_i(v)]\ ,
\]
where $v$ belongs to the fiber over $x\in X$ and $[x,q]\in
QM(\mathcal{V})$ denotes the equivalence class of $(x,q)$. We may
consider the quasi-strict topology on $QM(V)$, the quotient
topology induced by this on the space $QM(\mathcal{V})$ will again
be called the quasi-strict topology on the bundle
$QM(\mathcal{V})$. This is the last ingredient to phrase the
analogue of (\ref{eq:APT73}) in the case of bundles.

\begin{teo}\label{teo:q_m_comm_case}
For the quasi-multipliers of $\mathcal{V}_0(X)$ we have an
isometric bimodule isomorphism
\[
 QM(\mathcal{V}_0(X)) \cong C_b(X, QM(\mathcal{V}))\ ,
\]
where $QM(\mathcal{V})$ on the right-hand side is equipped with the
quasi-strict topology.
\end{teo}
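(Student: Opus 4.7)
The plan is to construct mutually inverse isometric bimodule maps
\[
\Phi\colon QM(\mathcal{V}_0(X)) \to C_b(X, QM(\mathcal{V})), \qquad
\Psi\colon C_b(X, QM(\mathcal{V})) \to QM(\mathcal{V}_0(X)),
\]
in analogy with the proof of \cite[Corollary 3.4]{APT73} for ordinary multipliers.

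To define $\Phi$, for $q \in QM(\mathcal{V}_0(X))$ and $x \in X$ I would set $\Phi(q)(x) \in QM(\mathcal{V}_x) \cong QM(V)$ (the identification via any trivializing chart $\varphi_i$ with $x \in U_i$) by
\[
\Phi(q)(x)(a, b) := q(f, g)(x), \qquad a \in A,\ b \in B,
\]
where $f \in A_0(X)$, $g \in B_0(X)$ are arbitrary lifts with $f(x) = a$, $g(x) = b$, which exist by local compactness. The pivotal well-definedness step is to show $q(f, g)(x) = 0$ whenever $f(x) = 0$; I plan to deduce this from Cohen's factorization inside the closed ideal $I_x = \{h \in A_0(X) : h(x) = 0\}$, writing $f = f_1 f_2$ with $f_1, f_2 \in I_x$, so that $q(f, g) = f_1 \cdot q(f_2, g)$ evaluates to $0$ at $x$, with a symmetric argument on the $B$-side. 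Bilinearity and the $(A,B)$-bimodule identity for $\Phi(q)(x)$ transfer pointwise from $q$, while $\|\Phi(q)(x)\| \le \|q\|$ follows by choosing bump-function lifts whose norms are arbitrarily close to $\|a\|, \|b\|$. For quasi-strict continuity of $x \mapsto \Phi(q)(x)$, Lemma \ref{lem:qs_and_strong_top} reduces matters to norm-continuity of $x \mapsto \Phi(q)(x)(a, b) \in \mathcal{V}_x$ for each fixed $(a, b)$; on a neighbourhood of a point $x_0$, using fixed compatible lifts $f_0 = \chi a$, $g_0 = \chi b$ with $\chi \in C_c(X)$ equal to $1$ near $x_0$, the map $\Phi(q)(\cdot)(a, b)$ agrees with the continuous section $q(f_0, g_0) \in \mathcal{V}_0(X)$ near $x_0$. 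Combined with $\sup_x \|\Phi(q)(x)\| \le \|q\|$, this places $\Phi(q)$ in $C_b(X, QM(\mathcal{V}))$.

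The inverse $\Psi$ I would define by the pointwise formula $\Psi(\phi)(f, g)(x) := \phi(x)(f(x), g(x))$. Continuity of $\Psi(\phi)(f, g)$ as a section of $\mathcal{V}$ then follows from quasi-strict continuity of $\phi$ together with continuity of $f, g$ via a triangle estimate controlled by $\|\phi\|_\infty$; vanishing at infinity is immediate from $\|\phi(x)(f(x), g(x))\| \le \|\phi\|_\infty \|f(x)\| \|g(x)\|$, and the same estimate yields $\|\Psi(\phi)\| \le \|\phi\|_\infty$. The bimodule identity for $\Psi(\phi)$ reduces fibrewise to that of each $\phi(x)$. A direct inspection shows $\Psi \circ \Phi = \mathrm{id}$ and $\Phi \circ \Psi = \mathrm{id}$ and that both maps are isometries, completing the isomorphism. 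I expect the main obstacle to be the well-definedness of $\Phi(q)(x)$ — the Cohen-factorization step in $I_x$ — together with the bookkeeping needed to interpret the fibrewise objects $\Phi(q)(x) \in QM(\mathcal{V}_x)$ consistently across the local trivializations $\varphi_i$ of $\mathcal{V}$ and the induced trivializations of $QM(\mathcal{V})$.
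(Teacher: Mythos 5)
Your proposal follows the same architecture as the paper's proof: an evaluation map sending $q$ to the section $x\mapsto \Phi(q)(x)$, defined via bump-function lifts $\chi\,a\in A_0(X)$, $\chi\,b\in B_0(X)$; the pointwise formula $\Psi(\phi)(f,g)(x)=\phi(x)(f(x),g(x))$ for the inverse; Lemma~\ref{lem:qs_and_strong_top} to convert quasi-strict continuity of the section of $QM(\mathcal{V})$ into norm-continuity of the $\mathcal{V}_0(X)$-sections $q(f,g)$ in a local trivialization; and the two norm estimates which, combined with mutual inversity, yield the isometry. The one point where you genuinely diverge is the well-definedness of the evaluation: you factor $f=f_1f_2$ inside the ideal $I_x=\{h\in A_0(X):h(x)=0\}$ by Cohen's factorization theorem and use left $A_0(X)$-linearity to get $q(f,g)(x)=f_1(x)\,q(f_2,g)(x)=0$, whereas the paper applies the Cauchy--Schwarz-type inequality ${}_{A_0(X)}\langle G(\alpha,\beta),G(\alpha,\beta)\rangle\le \|G(\cdot,\beta)\|^2\,{}_{A_0(X)}\langle\alpha,\alpha\rangle$ for the bounded module map $G(\cdot,\beta)$ and evaluates this positive element at the point. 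Both arguments are correct; yours has the advantage of not using the inner products at all, so it would carry over verbatim to the Banach-bimodule setting of Section~3, while the paper's stays within the Hilbert-module toolkit. (A minor simplification: with $\|\chi\|_\infty=1$ the lift satisfies $\|\chi\,a\|=\|a\|$ exactly, so no approximation is needed for the bound $\|\Phi(q)(x)\|\le\|q\|$.) Everything else --- the local agreement of $\Phi(q)(\cdot)(a,b)$ with the continuous section $q(\chi a,\chi b)$ where $\chi\equiv 1$, the three-term triangle estimate for continuity of $\Psi(\phi)(f,g)$, and the chart bookkeeping built into the definition of the bundle $QM(\mathcal{V})$ --- matches the paper's proof.
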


\begin{proof}
We are going to construct explicit maps in both directions and
show that they are inverse to each other. Denote by $\pi\colon
QM(\mathcal{V}) \rightarrow X$ the bundle projection. For the map
from the left hand side to the right we need an evaluation map
turning a quasi-multiplier on sections $QM(\mathcal{V}_0(X))$ into
a quasi-multiplier on a fixed fiber $QM(\mathcal{V})_y =
\pi^{-1}(y)$. Therefore we need to be able to construct sections
of $A_0(X)$, $B_0(X)$ with a prescribed value at a given point $y
\in X$. Local compactness enables us to achieve this. Let $a \in
A$, $b \in B$ be given. By passing to the one-point
compactification $X^+$ (which is normal) we can construct a
function
\[
 \chi^y \colon  X^+ \longrightarrow [0,1]
\]
which is $1$ at $y$ and vanishes at $\infty$. In particular, we
may set $\alpha = \chi^y\,a \in A_0(X)$ and $\beta = \chi^y\,b
\in B_0(X)$.

If $\mathcal{V}_y$ denotes the fiber of $\mathcal{V}$ over $y \in X$,
then $QM(\mathcal{V})_y$ is by construction canonically isomorphic to
$QM(\mathcal{V}_y)$. Let $\alpha, \beta$ be sections of $A_0(X)$,
$B_0(X)$ as above and set
\[
 \varphi_y \colon QM(\mathcal{V}_0(X)) \longrightarrow QM(\mathcal{V})_y
\quad ; \quad \varphi_y(G)(a,b) = G(\alpha, \beta)(y)\ .
\]
To see that this does not depend on the choice of $\alpha$ note
that $G(\cdot, \beta) \colon  A_0(X) \rightarrow \mathcal{V}_0(X)$
is left $A_0(X)$-linear and bounded for any $\beta \in B_0(X)$,
therefore
\[
_{A_0(X)}\langle\,G(\alpha, \beta)\,,\,G(\alpha, \beta)\,\rangle\,
\leq\,\lVert G(\cdot, \beta) \rVert^2\cdot
\,_{A_0(X)}\langle\,\alpha,\,\alpha\,\rangle \ .
\]
If $\alpha(y) = 0$ this implies $_{A_0(X)}\langle\,G(\alpha,
\beta)\,,\,G(\alpha, \beta)\,\rangle(y) = 0$. Thus, $\varphi_y$
does not depend on the choice of $\alpha$. The same argument shows
that different choices of $\beta$ will lead to the same map
$\varphi_y$. Furthermore
\begin{gather}\label{eq:G_is_bounded}
 \lVert \varphi_y(G)(a,b) \rVert = \lVert G(\alpha, \beta)(y) \rVert
\leq \lVert G \rVert\,\lVert \alpha \rVert\,\lVert \beta \rVert
= \lVert G \rVert\,\lVert a \rVert\,\lVert b \rVert
\end{gather}
proves that $\varphi_y(G)$ is bounded and therefore indeed defines
an element of $QM(\mathcal{V}_y) = QM(\mathcal{V})_y$. Note that
the upper bound can be chosen independently of $y \in X$.

Recall that a section $\sigma \colon X \rightarrow
QM(\mathcal{V})$ is continuous at $y \in Y$ if and only if there
exists a trivialization ${\psi_U} \colon
\left.QM(\mathcal{V})\right|_U \rightarrow U \times QM(V)$ such
that the map $\mathrm{pr_2}\circ\psi_U \circ \left.\sigma\right|_U
\colon U \rightarrow QM(V)$ is continuous. Let $\phi_U \colon
\left.\mathcal{V}\right|_U \rightarrow U \times V$ be a local
trivialization of $\mathcal{V}$. By construction of
$QM(\mathcal{V})$ there is a corresponding trivialization $\psi_U$
such that for $y \in U, q \in QM(\mathcal{V})_y =
QM(\mathcal{V}_y)$, $a \in A$ and $b \in B$ we have
\[
(\mathrm{pr_2}\circ\psi_U(q))(a,b) = \mathrm{pr_2}\circ
\phi_U(q(a,b))\ .
\]

Now let $\varepsilon > 0$. Since $G(\alpha,\beta) \in \mathcal{V}_0(X)$ is
continuous at $y$, we can find an open neighborhood $U \ni y$ and
a trivialization $\phi_U : \left.\mathcal{V}\right|_U \rightarrow
U \times V$, such that
\[
\lVert \mathrm{pr_2}\circ\phi_U(G(\alpha,\beta)(y)) -
\mathrm{pr_2}\circ\phi_U(G(\alpha,\beta)(y')) \rVert \leq
\varepsilon
\]
for all $y' \in U$.  In view of our above observation this
proves continuity of $y \mapsto \varphi_y(G)$ with respect to
the quasi-strict topology, since applying Lemma
\ref{lem:qs_and_strong_top} one has
\begin{align*}
 &\lVert a \lhd \left(\mathrm{pr_2}\circ {\psi}_U(\varphi_y(G)) -
\mathrm{pr_2}\circ {\psi}_U(\varphi_{y'}(G))\right) \rhd b
\rVert \\
 =&\lVert \mathrm{pr_2}\circ {\psi}_U(\varphi_y(G))(a,b) -
 \mathrm{pr_2}\circ {\psi}_U(\varphi_{y'}(G))(a,b) \rVert \\
 =&\lVert \mathrm{pr_2}\circ \phi_U(\varphi_y(G)(a,b)) -
 \mathrm{pr_2}\circ\phi_U(\varphi_{y'}(G)(a,b))
\rVert \\
=&\lVert \mathrm{pr_2}\circ\phi_U(G(\alpha,\beta)(y)) -
\mathrm{pr_2}\circ\phi_U(G(\alpha,\beta)(y'))
 \rVert \leq \varepsilon\ .
\end{align*}
By the independence of the bound in (\ref{eq:G_is_bounded}) the
section constructed above is also bounded. Therefore
\[
 S \colon  QM(\mathcal{V}_0(X)) \longrightarrow C_b(X, QM(\mathcal{V}))
\quad , \quad G \mapsto (y \mapsto \varphi_y(G))\ .
\]
is well-defined, linear and satisfies $\lVert S(G) \rVert \leq
\lVert G \rVert$. For the inverse direction consider
\[
\Phi \colon  C_b(X,QM(\mathcal{V}))\rightarrow QM(\mathcal{V}_0(X))
\; , \quad \Phi(F)(\alpha,\beta)(x):=F(x)(\alpha(x),\beta(x))\ .
\]
First, we have to check that the element $\Phi(F)(\alpha,\beta)$
belongs to $\mathcal{V}_0(X)$, i.\,e. that the function
\[
x\mapsto F(x)(\alpha(x),\beta(x))
\]
vanishes at infinity and is continuous. For any $\varepsilon>0$
there is a compact  $K\subset X$ such that
$\|\alpha(x)\|<\varepsilon$ and $\|\beta(x)\|<\varepsilon$ for
$x\in X\setminus K$. Then
\begin{align}\label{eq:Phi_is_bounded}
\|\Phi(F)(\alpha,\beta)(x)\|&=\|F(x)(\alpha(x),\beta(x))\|\\
&\le \|F(x)\|\|\alpha(x)\|\|\beta(x)\|\le \|F\|\varepsilon^2\notag
\end{align}
for $x\in X\setminus K$ proving that it vanishes at infinity.

For the verification of continuity let $\varepsilon>0$ and $x\in X$.
There is a neighborhood $U_1$ of $x$ such that
\begin{gather*}\label{eq:qm_comm_1}
    \|\alpha(x)-\alpha(y)\|<\varepsilon\quad\text{and}\quad
    \|\beta(x)-\beta(y)\|<\varepsilon\quad\text{whenever } y\in
    U_1.
\end{gather*}
 On the other hand by Lemma
\ref{lem:qs_and_strong_top} there is a neighborhood $U_2 \subset U_1$
of $x$ such that
\begin{eqnarray*}\label{eq:qm_comm_2}
&& \left\|\left(\left.\mathrm{pr_2}\circ {\psi}_{U_2} \circ
F\right|_{U_2}\right)(x)(\alpha(x),\beta(x))
-\left(\left.\mathrm{pr_2}\circ {\psi}_{U_2} \circ
F\right|_{U_2}\right)(y)(\alpha(x),\beta(x))\right\|
\\
&=& \|\mathrm{pr_2}\circ\phi_{U_2}(F(x)(\alpha(x),\beta(x)))-
\mathrm{pr_2}\circ\phi_{U_2}(F(y)(\alpha(x),\beta(x)))\| <
\varepsilon
\end{eqnarray*}
whenever $y\in U_2$. One has
\begin{eqnarray*}
&&\|\mathrm{pr_2}\circ\phi_{U_2}(\Phi(F)(\alpha,\beta)(x))-
\mathrm{pr_2}\circ\phi_{U_2}(\Phi(F)(\alpha,\beta)(y))\| \\
&=& \|\mathrm{pr_2}\circ\phi_{U_2}(F(x)(\alpha(x),\beta(x)))-
\mathrm{pr_2}\circ\phi_{U_2}(F(y)(\alpha(y),\beta(y)))\|\\
    &\le&
    \|\mathrm{pr_2}\circ\phi_{U_2}(F(x)(\alpha(x),\beta(x)))-
    \mathrm{pr_2}\circ\phi_{U_2}(F(y)(\alpha(x),\beta(x)))\|\\
    &+&\|\mathrm{pr_2}\circ\phi_{U_2}(F(y)(\alpha(x),\beta(x)))-
    \mathrm{pr_2}\circ\phi_{U_2}(F(y)(\alpha(y),\beta(x)))\|\\
    &+&\|\mathrm{pr_2}\circ\phi_{U_2}(F(y)(\alpha(y),\beta(x)))-
    \mathrm{pr_2}\circ\phi_{U_2}(F(y)(\alpha(y),\beta(y)))\|
    \\&\le& \varepsilon+\|F\|\|\beta\|\varepsilon+\|F\|\|\alpha\|\varepsilon
\end{eqnarray*}
for $y\in U_2$, which proves continuity of
$\Phi(F)(\alpha,\beta)$. Together with the norm estimates
(\ref{eq:Phi_is_bounded}), this completes the proof of
well-definedness of $\Phi$. Clearly, $\Phi$ is the inverse of $S$.
Moreover, the inequalities (\ref{eq:G_is_bounded}) and
(\ref{eq:Phi_is_bounded}) ensure that $S$ is an isometry.
\end{proof}

\begin{rk}
The evaluation map $\varphi_y$ used in the proof coincides
with the extension of
\[
\varphi_y \colon \mathcal{V}_0(X) \longrightarrow \mathcal{V}_y
\]
with respect to the quasi-strict topology.
\end{rk}

Let $\mathcal{V}$ be a bundle of right Hilbert $B$-modules for a
$C^*$-algebra $B$. By a similar construction as the one given
above there is a bundle $LM(\mathcal{V})$ of left multipliers and
a bundle $M(\mathcal{V})$ of double multipliers. The above
arguments may be used to prove the following analogue of Theorem
\ref{teo:q_m_comm_case} for left and (double) multipliers.

\begin{teo}
There are the following isometric $B$-module isomorphisms
\begin{gather*}
 LM(\mathcal{V}_0(X)) \cong C_b(X, LM(\mathcal{V}))\ ,\\
 M(\mathcal{V}_0(X)) \cong C_b(X, M(\mathcal{V}))\ ,
\end{gather*}
where $LM(\mathcal{V})$ (resp., $M(\mathcal{V})$) on the
right-hand side are equipped with the left strict (resp.,
strict) topology. $\Box$
\end{teo}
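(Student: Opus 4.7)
The plan is to mimic the proof of Theorem \ref{teo:q_m_comm_case} almost verbatim, with quasi-multipliers replaced by left (respectively, double) multipliers and the bilinear extension argument simplified to a single variable. I will first establish the $LM$ isomorphism and then promote it to the $M$ case by a separate adjointability argument; compatibility with the $B$-module structure will be immediate by construction in both cases.

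For the $LM$ case, let $G \in LM(\mathcal{V}_0(X))$. Given $y \in X$ and $b \in B$, pick a one-point-compactification cutoff $\chi^y \colon X^+ \to [0,1]$ with $\chi^y(y)=1$, set $\beta := \chi^y b \in B_0(X)$, and define $\psi_y(G)(b) := G(\beta)(y)$. Independence of this definition from the choice of $\beta$ extending $b$ follows from the inner-product estimate $\langle G(\beta),G(\beta)\rangle_{B_0(X)} \le \|G\|^2\,\langle \beta,\beta\rangle_{B_0(X)}$, evaluated at $y$. The resulting $\psi_y(G)$ is $B$-linear with $\|\psi_y(G)\| \le \|G\|$ independently of $y$, so it lies in $LM(\mathcal{V})_y = LM(\mathcal{V}_y)$, and the trivialization argument from Theorem \ref{teo:q_m_comm_case} shows that $y \mapsto \psi_y(G)$ is continuous in the left strict (pointwise-norm) topology. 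In the reverse direction, $\Phi(F)(\beta)(x) := F(x)(\beta(x))$ produces an element of $\mathcal{V}_0(X)$ via the usual triangle-inequality continuity argument (now with only two summands), and vanishing at infinity is immediate from the bound $\|\Phi(F)(\beta)(x)\| \le \|F\|\,\|\beta(x)\|$. The two maps are mutual inverses, and the above estimates yield the isometry property.

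The $M$ statement follows by applying the $LM$ argument to both $G \in M(\mathcal{V}_0(X))$ and its adjoint $G^*$, producing continuous bounded sections of $LM(\mathcal{V})$. The compatibility $\psi_y(G)^* = \psi_y(G^*)$, proved by evaluating $\langle G(\beta_1), \beta_2\rangle_{B_0(X)} = \langle \beta_1, G^*(\beta_2)\rangle_{B_0(X)}$ at $y$ for $\beta_i = \chi^y b_i$, ensures that $\psi_y(G) \in M(\mathcal{V}_y)$ for all $y$; continuity in the strict topology then corresponds precisely to simultaneous continuity of the two $LM$-sections produced from $G$ and $G^*$. The main technical point, as in Theorem \ref{teo:q_m_comm_case}, is the well-definedness and fiberwise continuity of the evaluation $\psi_y$; once these are settled, the rest is a routine transcription of the earlier proof, with the notion of strict topology on the multiplier bundles matched to the kind of continuity the evaluation argument naturally produces.
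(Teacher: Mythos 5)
Your proposal follows exactly the route the paper takes: the paper gives no separate argument for this theorem, stating only that ``the above arguments may be used,'' i.e.\ the proof of Theorem \ref{teo:q_m_comm_case} is to be transcribed with one variable in place of two. Your $LM$ argument is a faithful and correct such transcription: the evaluation $\psi_y(G)(b)=G(\chi^y b)(y)$, independence of the cutoff via the Cauchy--Schwarz-type estimate, the uniform bound, the local-trivialization continuity argument, and the inverse $\Phi(F)(\beta)(x)=F(x)(\beta(x))$ are precisely the intended steps, and the left strict topology (seminorms $\lambda\mapsto\|\lambda(b)\|$, which by the one-variable analogue of Lemma \ref{lem:qs_and_strong_top} coincide with $\lambda\mapsto\|\lambda\rhd b\|$) is the one for which your continuity argument lands.

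One detail in your $M$ case needs repair. For $G\in M(\mathcal{V}_0(X))=\Hom^*_{B_0(X)}(B_0(X),\mathcal{V}_0(X))$ the adjoint is a map $G^*\colon \mathcal{V}_0(X)\to B_0(X)$, so it is not itself a left multiplier of $\mathcal{V}_0(X)$ and does not produce a section of $LM(\mathcal{V})$; correspondingly your identity $\langle G(\beta_1),\beta_2\rangle_{B_0(X)}=\langle\beta_1,G^*(\beta_2)\rangle_{B_0(X)}$ does not typecheck, since $\beta_2$ lies in $B_0(X)$ rather than in $\mathcal{V}_0(X)$. The correct relation is $\langle G(\beta),\sigma\rangle_{B_0(X)}=\beta^*\,G^*(\sigma)$ for sections $\sigma\in\mathcal{V}_0(X)$. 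One then defines the fiberwise adjoint by $\psi_y(G)^*(v):=G^*(\sigma)(y)$ for any section $\sigma$ with $\sigma(y)=v$; such sections exist by local triviality combined with the cutoff $\chi^y$, and independence of the choice follows from $(G^*\sigma)^*(G^*\sigma)\le\|G\|^2\,\langle\sigma,\sigma\rangle_{B_0(X)}$ evaluated at $y$. Evaluating the global adjointability relation at $y$ gives $\langle\psi_y(G)(b),v\rangle_B=b^*\,\psi_y(G)^*(v)$, so $\psi_y(G)\in M(\mathcal{V}_y)$, and continuity in the strict topology is then controlled by the two families of seminorms $\mu\mapsto\|\mu(b)\|$ and $\mu\mapsto\|\mu^*(v)\|$ rather than by two $LM$-sections. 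With this correction your argument goes through and is the proof the paper intends.
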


{\bf Acknowledgement:}
The presented work was partially supported by the
joint RFBR and Deutsche Forschungsgemeinschaft (DFG) project (RFBR grant
07-01-91555 / DFG project
``K-Theory, $C^*$-algebras, and Index theory''). AP was partially supported by
the RFBR (grant 08-01-00867).
The research was carried out during
the visit of the first author to the Georg-August-Universit\"at
  G\"{o}ttingen,
and he appreciates its hospitality a lot.

The authors are grateful to the referee for his/her valuable suggestions.

\end{document}